\documentclass{article}

\usepackage[english]{babel}

\usepackage[letterpaper,top=2cm,bottom=2cm,left=3cm,right=3cm,marginparwidth=1.75cm]{geometry}

\usepackage{amsmath, amssymb, amsthm}
\usepackage{mathtools}
\usepackage{graphicx}
\usepackage{tikz}
\usepackage{subcaption}
\usepackage[colorlinks=true, allcolors=blue]{hyperref}
\usepackage{caption}
\usepackage{empheq}
\usepackage{nicematrix}
\usepackage{tabularx}
\usepackage{array}
\usepackage{booktabs}
\usepackage{enumerate}

\usetikzlibrary{positioning, arrows.meta}

\tikzset{
  plotframe/.style={
    draw=black!45,
    line width=0.4pt
  },
  plotaxis/.style={
    -{Latex[length=3pt,width=2.2pt]},
    line width=0.4pt
  },
  plottick/.style={
    line width=0.4pt
  },
  plotpoint/.style={
    fill=black
  }
}

\theoremstyle{plain}
\newtheorem{theorem}{Theorem}[section]
\newtheorem{lemma}[theorem]{Lemma}
\newtheorem{proposition}[theorem]{Proposition}

\theoremstyle{definition}
\newtheorem{definition}[theorem]{Definition}

\theoremstyle{remark}
\newtheorem{remark}[theorem]{Remark}

\numberwithin{figure}{section}
\newlength{\panelwidth}
\newcommand{\commonaxes}{%
  \path[use as bounding box] (-1.02,-1.02) rectangle (1.02,1.02);

  \draw[plotframe] (-1,-1) rectangle (1,1);

  \draw[plotaxis]
    (-0.82,0) -- (0.82,0)
    node[right=1pt] {$x$};

  \draw[plotaxis]
    (0,-0.82) -- (0,0.82)
    node[above=1pt] {$y$};

  \draw[plottick] (-0.5,-0.025) -- (-0.5,0.025);
  \draw[plottick] ( 0.5,-0.025) -- ( 0.5,0.025);

  \node[below=3pt] at (-0.5,0) {$-\frac12$};
  \node[below=3pt] at ( 0.5,0) {$ \frac12$};

  \draw[plottick] (-0.025,-0.5) -- (0.025,-0.5);
  \draw[plottick] (-0.025, 0.5) -- (0.025, 0.5);

  \node[left=3pt] at (0,-0.5) {$-\frac12$};
  \node[left=3pt] at (0, 0.5) {$ \frac12$};
}

\title{Classification of Stationary Configurations of Four Identical Point Vortices}
\author{Woojin Kim}

\begin{document}
\maketitle

\begin{abstract}
We find and classify every stationary configuration of the system of four identical point vortices in the plane.
Up to the natural symmetries of the system, there are exactly three configurations, which are relative equilibria: a square, a collinear configuration, and an equilateral triangle with a central vortex.
Among them, the square configuration is the global minimizer of the Hamiltonian, whereas the other two configurations are saddle points.
In particular, the square is the unique stable configuration among the stationary configurations.
Using the special algebraic structure of four points, we introduce a change of variables based on the Hadamard matrix of order four, which transforms the original four-vortex problem into an equivalent three-point problem.

\vspace{0.8em}
\noindent\textbf{Keywords:} Four-vortex system, Stationary Configuration, Relative equilibrium, Hadamard matrix.

\end{abstract}

\section{Introduction}

\subsection{Point-vortex dynamics and relative equilibria}

The point-vortex model is a classical model in fluid dynamics that describes the motion of idealized vortices in a two-dimensional incompressible and inviscid fluid.
The equations governing interacting point vortices originate in Helmholtz’s 1858 work on vortex motion and were subsequently formulated in Hamiltonian form by Kirchhoff. 

\begin{definition}
A \textit{configuration} is a set of four distinct points in the plane, denoted by $(z_1, z_2, z_3, z_4) \in \mathbb{C}^4$.
Here $z_i = x_i + i y_i$ represents the position $(x_i, y_i)$ of the $i$th vortex, for $i = 1, 2, 3, 4$.
\end{definition}

For a configuration $(z_1, z_2, z_3, z_4)$, let $\Gamma_i$ be the intensity of the $i$-th vortex at $z_i$.
Then the system has a Hamiltonian $W$ given by

\[
W(z_1, z_2, z_3, z_4) = - \frac{1}{2\pi} \sum_{1 \le i < j \le 4} \Gamma_i \Gamma_j \log |z_i - z_j|.
\]

The equations of motion of the system are given by the Kirchhoff equations:

\[
\Gamma_i \frac{d x_i}{dt} = \frac{\partial W}{\partial y_i}, \qquad \Gamma_i \frac{d y_i}{dt} = -\frac{\partial W}{\partial x_i}, \qquad i = 1, 2, 3, 4.
\]

In this paper, we consider the case of four identical vortices: $\Gamma_i = \Gamma$ for $1 \le i \le 4$.

The initial motivation for this problem came from Mayer's floating-magnet experiments in an external magnetic field.
Mayer inserted magnetized needles into corks so that they floated vertically on water, with all needles oriented in the same direction.
He then held a strong magnet above the water with the opposite pole facing the needles.
In this arrangement, the needles repel one another while the external magnet attracts them toward the center.
Mayer investigated the resulting equilibrium configurations, examples of which are shown in Figure \ref{fig:magnet_config}.

\begin{figure}[htbp]
    \centering
    \includegraphics[width=0.55\textwidth]{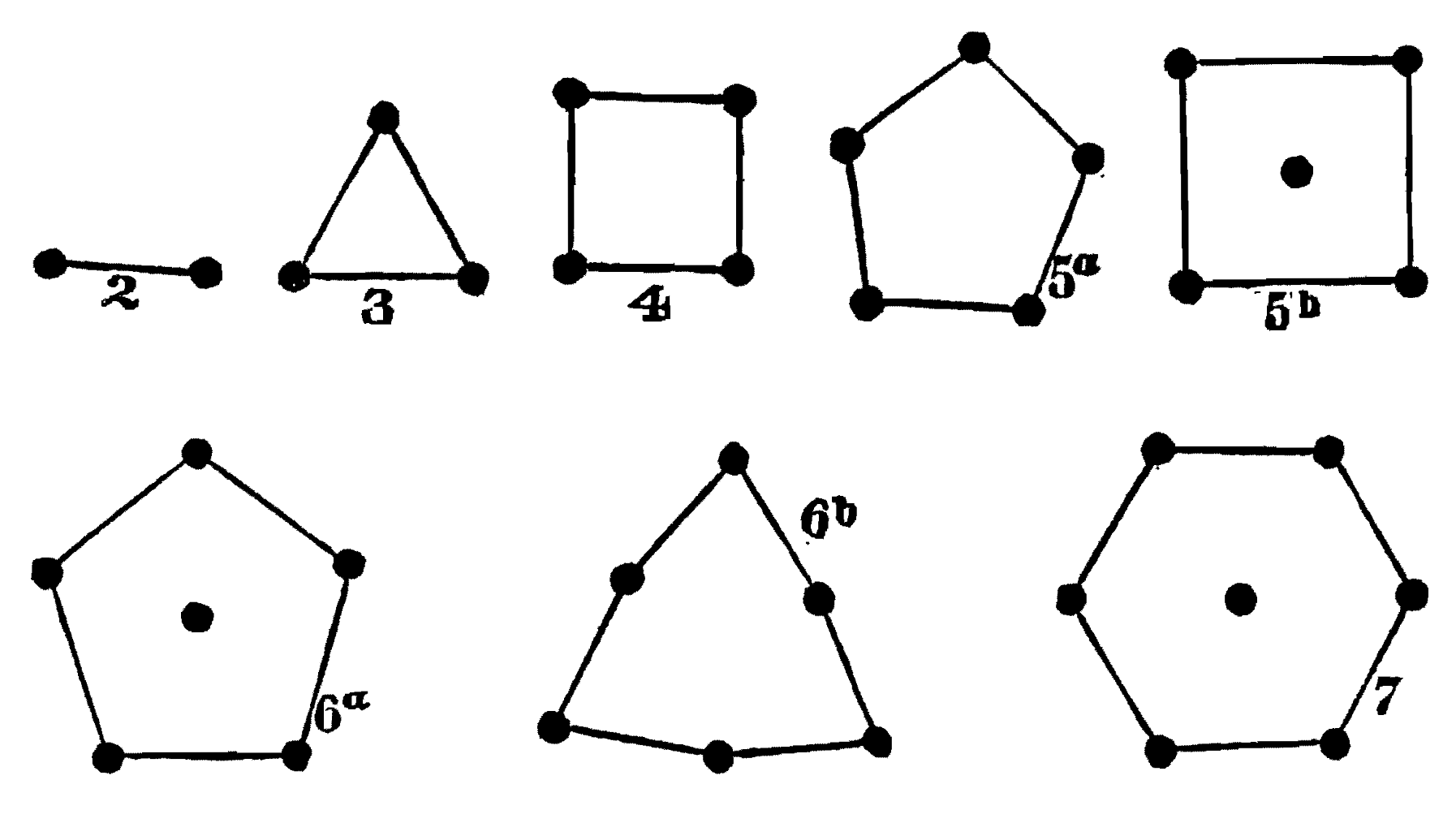}
    \caption{Selected configurations from Mayer’s floating-magnet experiments \cite{Mayer1878}.}
    \label{fig:magnet_config}
\end{figure}

This floating-magnet system is analogous to the point-vortex system considered by W. Thomson in his study of the stability of regular vortex $n$-gons.
The static equilibria of the floating magnets correspond to relative equilibria of the point-vortex system; point-vortex dynamics therefore provides a model for the behavior of the floating magnets.

We next define the center and impulse of a configuration, both of which are conserved quantities for the identical-vortex system.

\begin{definition}
The \textit{center} of the configuration $\bar{z} \in \mathbb{C}$ and the \textit{impulse} of the configuration $I \in \mathbb{R}^+$ are defined as
\[
\bar{z}= \frac{1}{4} \sum_{i=1}^4 z_i, \qquad
I = \sum_{i=1}^4 |z_i - \bar{z}|^2.
\]
\end{definition}

The main objects of study in this paper are stationary configurations and relative equilibria of the point-vortex system.
We follow the definitions introduced by O'Neil \cite{ONeil1987}.

\begin{definition} [\textbf{O'Neil} \cite{ONeil1987}]
A configuration $(z_1, z_2, z_3, z_4) \in \mathbb{C}^4$ is a \textit{stationary configuration} if there exists a constant $\omega \in \mathbb{C}$ so that the velocities and positions satisfy
\[
\dot{z_i} - \dot{z_j} = \omega(z_i - z_j), \quad 1 \le i < j \le 4.
\]
In particular, a configuration is a \textit{relative equilibrium} if there exist constants $0 \neq \omega \in \mathbb{R}, z_0 \in \mathbb{C}$, so that
\[
\dot{z_i} = i \omega (z_i - z_0), \quad 1 \le i \le 4.
\]
\end{definition}

The geometric meaning of a stationary configuration is a configuration that the shape of the arrangement of points is unchanging over time.

\subsection{Main result}

The central result of this paper is a purely analytical classification and complete variational characterization of stationary configurations of four identical point vortices.
Up to symmetries, there are only three kinds of stationary configurations, which are relative equilibria: a square, a collinear, and an equilateral triangle with a center.
Crucially, these configurations can be characterized by their energetic properties on the phase space constrained by the conserved quantities. 
For any given center and impulse, the square configuration is the global minimizer of the Hamiltonian. 
The other two configurations correspond to saddle points on this constrained energy landscape.

\begin{theorem}\label{maintheorem}

Let $(z_1(t), z_2(t), z_3(t), z_4(t)) \in \mathbb{C}^4$ be a configuration with center $z_c \in \mathbb{C}$, impulse $I \in \mathbb{R}^+$, and identical intensities $\Gamma \neq 0$. 
Let $\displaystyle{\omega = \frac{3\Gamma}{\pi I}}$. The configuration is a stationary configuration if and only if it belongs to one of the following three types, up to reindexing of vortices and a global spatial rotation about $z_c$:

\begin{enumerate}[\upshape 1.]
    \item \textbf{Square configuration (global minimizer):}
    \[
    z_j(t) = z_c + \frac{1}{2}\sqrt{I} e^{i(\omega t + \frac{\pi}{2}(j-1))}, \quad j = 1, 2, 3, 4.
    \]

    \item \textbf{Collinear configuration (saddle point):}
    \[
    \begin{aligned}
    z_j(t) &= z_c + \frac{1}{2\sqrt{3-\sqrt{6}}}\sqrt{I} e^{i(\omega t + \pi(j-1))}, \quad j = 1, 2, \\
    z_j(t) &= z_c + \frac{1}{2\sqrt{3+\sqrt{6}}}\sqrt{I} e^{i(\omega t + \pi(j-1))}, \quad j = 3, 4.
    \end{aligned}
    \]

    \item \textbf{Equilateral triangle with a center configuration (saddle point):}
    \[
    \begin{aligned}
    z_1(t) &= z_c, \\
    z_j(t) &= z_c + \frac{1}{\sqrt{3}}\sqrt{I} e^{i\left(\omega t + \frac{2\pi}{3}(j-2)\right)}, \quad j = 2, 3, 4.
    \end{aligned}
    \]
\end{enumerate}

On the constrained phase space with fixed $I$, the square configuration is the unique global minimizer of the Hamiltonian, whereas the collinear and equilateral triangle configurations are saddle points.

\end{theorem}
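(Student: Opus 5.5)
We will work in complex notation. With $\Gamma_i=\Gamma$, the Kirchhoff equations read
\[
\dot{\bar z}_j=\frac{\Gamma}{2\pi i}\sum_{k\ne j}\frac{1}{z_j-z_k}.
\]

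\textbf{Step 1: reduce stationarity to a relative-equilibrium system.} The stationarity condition $\dot z_i-\dot z_j=\omega(z_i-z_j)$ is equivalent to $\dot z_j=\omega(z_j-z_c)+v$ for some common $v$. The total velocity $\sum_j \dot z_j=0$ vanishes by antisymmetry, and this forces $v=0$. So we must solve
\[
\sum_{k\ne j}\frac{1}{\bar z_j-\bar z_k}=\lambda (z_j-z_c),\qquad \lambda=\overline{\omega}\,\frac{2\pi i}{\Gamma}\cdot(-1)\in\mathbb C .
\]
Multiply the $j$-th equation by $\bar z_j-\bar z_c$ and sum over $j$. Pairing the $(j,k)$ and $(k,j)$ terms gives $\sum_{j<k}1=6$ on the left, so $6=\lambda I$. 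Hence $\lambda$ is real, which forces $\omega$ to be purely imaginary. This shows every stationary configuration is a relative equilibrium with angular velocity $\Gamma\cdot 6/(2\pi I)=3\Gamma/(\pi I)$, matching the value of $\omega$ in the statement. The problem is thereby reduced to classifying the critical points of $W$ restricted to $\{\bar z=z_c,\ I \text{ fixed}\}$. These critical points are exactly the solutions of $\nabla W=\mu\nabla I$, i.e.\ the central-configuration equations above with $\lambda=6/I$.

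\textbf{Step 2: classify the critical points.} Following the abstract, we apply the Hadamard change of variables
\[
(w_0,w_1,w_2,w_3)^T=\tfrac12 H_4 (z_1,\dots,z_4)^T,
\]
with $w_0$ proportional to the center, so that $w_0=0$ after translation. The six differences $z_i-z_j$ then pair up as $\pm w_a\pm w_b$. In particular
\[
|z_i-z_j|^2|z_k-z_l|^2=|w_a+w_b|^2|w_a-w_b|^2=|w_a^2-w_b^2|^2 ,
\]
where $\{a,b,c\}=\{1,2,3\}$. Consequently
\[
W=-\frac{\Gamma^2}{4\pi}\log\prod_{\{a,b\}}|w_a^2-w_b^2|^2 ,
\]
and $I=\sum_a|w_a|^2$. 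Setting $u_a=w_a^2$, the problem becomes maximizing $|u_1-u_2||u_2-u_3||u_3-u_1|$ subject to $\sum|u_a|$ fixed, which is a three-point problem. We then proceed as follows.

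1. Write the critical-point equations in $w_a$: for each $a$, the sum $\sum_{b\ne a}\frac{2\bar w_a}{\bar u_a-\bar u_b}$ is proportional to $w_a$.
2. Split into cases. In the first case all $w_a\ne0$. In the second, some $w_a=0$, which happens when two difference-pairs coincide in midpoint, i.e.\ the configuration is a parallelogram-type degeneracy.
3. Use the phase condition (each $\bar w_a\sum_b(\cdot)$ must be real relative to $w_a$) to force the $u_a$ to be collinear through $0$ or symmetric. This yields three families of solutions:
   \begin{itemize}
   \item $u_a$ forming a configuration with $\sum u_a$ balanced: the triangle plus centre;
   \item one $u_a=0$ with $u_b=-u_c$: the square;
   \item all $u_a$ real: collinear. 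Here the explicit radii $\frac{1}{2\sqrt{3\mp\sqrt6}}\sqrt I$ come from solving a quadratic.
   \end{itemize}
4. Undo the transformation and check the radii against the impulse. Up to relabelling and rotation, these are the three listed types.

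\textbf{Step 3: variational character.} A global minimizer exists, because $W\to+\infty$ at collisions on the compact set $I=\text{const}$. It must therefore be one of the three critical types, and comparing the values of $\prod|u_a-u_b|$ identifies it as the square. For the other two types we exhibit explicit directions in the Hessian of $W-\mu I$ of both signs, and we will use Lagrange multipliers to find them:
\begin{itemize}
\item for the collinear configuration, bending it out of the line lowers $W$, while relative stretching along the line raises it;
\item for the triangle-with-centre, displacing the centre vortex lowers $W$, while the equilateral deformation raises it.
\end{itemize}

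\textbf{Main obstacle.} The main obstacle will be Step 2, the exhaustive case analysis of the complex algebraic critical-point system in the $u$-variables. We must ensure no asymmetric solutions exist. Our first attempt will be to separate modulus and argument equations, reducing the problem to a real polynomial system; we will then use a resultant or Gröbner-style elimination to show the listed solutions are the only ones.
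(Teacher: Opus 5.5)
Your Step 1 is correct. It is a self-contained substitute for the paper's appeal to O'Neil's classification: the identity $\lambda I=6$ forces $\omega$ to be purely imaginary. Note that $\lambda=-2\pi i\omega/\Gamma$, without the conjugate; otherwise you get the wrong sense of rotation.

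The Hadamard reduction in Step 2 is the paper's, but the classification itself is missing. Item 3 states the expected answer (``the phase condition forces the $u_a$ to be collinear through $0$ or symmetric'') without an argument, and the fallback to resultants or Gr\"obner bases is neither carried out nor needed. The missing idea, for $u_1u_2u_3\neq0$, is that $\Pi=|u_1-u_2||u_2-u_3||u_3-u_1|$ is translation invariant and homogeneous of degree $3$. Set $S(s)=\sum_a|u_a-s|$. The curve $t\mapsto \frac{4}{S(ts)}(u_a-ts)_a$ stays on $\sum_a|u_a|=4$, and along it $\Pi=(4/S(ts))^3\Pi_0$. Criticality therefore forces $\nabla S(0)=0$, i.e.\ $\sum_a u_a/|u_a|=0$. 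So the $u_a$ have arguments $0,\frac{2\pi}{3},\frac{4\pi}{3}$. For the moduli, with $\alpha+\beta+\gamma=4$, one gets $\Pi^2=16s_2^2-s_2^3-64s_3$. Its Lagrange system $(\alpha-\beta)(A-64\gamma)=0$ and cyclic, with $A=32s_2-3s_2^2$, forces $\alpha=\beta=\gamma$. The case where some $w_b=0$ must be treated separately, because $w\mapsto w^2$ is not a local diffeomorphism there. Restrict to $z_1+z_3=z_2+z_4=0$ and write $(u_a,u_c)=(r,(4-r)e^{i\theta})$; then $\partial_\theta\Pi=0$ gives $\theta\in\{0,\pi\}$, i.e.\ the collinear or square configuration.

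Step 3 also fails for the triangle with centre. Your plan to exhibit Hessian directions of both signs cannot work there, because the constrained Hessian of $W$ at that configuration is positive semidefinite and degenerate, with a two-dimensional kernel transverse to rotations. Keep the arguments $0,\frac{2\pi}{3},\frac{4\pi}{3}$ and write the moduli as $\frac43+x,\frac43+y,\frac43+z$ with $x+y+z=0$. The formula above then becomes $\Pi^2=\frac{4096}{27}-64xyz+\frac18(x^2+y^2+z^2)^3$, with no quadratic term. Perturbing the relative arguments decreases $\log\Pi$ at second order, so it increases $W$.

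In particular, the direction you call descending actually ascends. Move the central vortex by $\delta$ relative to the triangle, shifting the others to keep the centre fixed. The product of its three distances is then $|R^3-\delta^3|$ for a suitable orientation, so it changes only at third order. Meanwhile $I$ grows quadratically, so restoring $I$ shrinks all distances and raises $W$.

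The saddle property is therefore a higher-order effect and has to be shown along curves. To lower $W$, the paper uses $\bigl(\frac43+t,(\frac43-t)e^{2\pi i/3},\frac43e^{4\pi i/3}\bigr)$, along which $\Pi^2=\frac{4096}{27}+t^6$; the cubic term along $x=y=t$, $z=-2t$ would also work. To raise $W$, it uses the translate-and-rescale curve. Your treatment of the collinear saddle and of the global minimizer is fine, given a complete Step 2.
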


Figures \ref{fig:square}, \ref{fig:collinear}, and \ref{fig:triangle-center} illustrate the shapes of the three types of stationary configurations, respectively.
(assuming $\frac{I}{\Gamma} = 1$ for simplicity).
The explicit solutions in the Theorem \ref{maintheorem} describe motions in which each shape shown in the figures has a fixed center and rotates at a specific angular velocity determined by the configuration's size and the vortex intensity.

\par\medskip

{%
\setlength{\panelwidth}{0.31\linewidth}
\centering

\begin{minipage}[t]{\panelwidth}
  \centering

  \resizebox{\linewidth}{!}{%
    \begin{tikzpicture}[
      x=3cm,
      y=3cm,
      every node/.style={font=\scriptsize}
    ]
      \commonaxes

      \fill ( 0.5, 0) circle[radius=2.5pt];
      \fill (-0.5, 0) circle[radius=2.5pt];
      \fill (0,  0.5) circle[radius=2.5pt];
      \fill (0, -0.5) circle[radius=2.5pt];

    \end{tikzpicture}%
  }

  \captionof{figure}{Square configuration.}
  \label{fig:square}
\end{minipage}%
\hfill
\begin{minipage}[t]{\panelwidth}
  \centering

  \resizebox{\linewidth}{!}{%
    \begin{tikzpicture}[
      x=3cm,
      y=3cm,
      every node/.style={font=\scriptsize}
    ]
      \commonaxes

      \pgfmathsetmacro{\xone}{
        0.25*(
          sqrt(2 + 2/sqrt(3))
          +
          sqrt(2 - 2/sqrt(3))
        )
      }

      \pgfmathsetmacro{\xtwo}{
        0.25*(
          sqrt(2 + 2/sqrt(3))
          -
          sqrt(2 - 2/sqrt(3))
        )
      }

      \foreach \x in {\xone,\xtwo,-\xone,-\xtwo}{
        \fill[plotpoint] (\x,0) circle[radius=2.5pt];
      }
    \end{tikzpicture}%
  }

  \captionof{figure}{Collinear configuration.}
  \label{fig:collinear}
\end{minipage}%
\hfill
\begin{minipage}[t]{\panelwidth}
  \centering

  \resizebox{\linewidth}{!}{%
    \begin{tikzpicture}[
      x=3cm,
      y=3cm,
      every node/.style={font=\scriptsize}
    ]
      \commonaxes

      \pgfmathsetmacro{\leftx}{-1/(2*sqrt(3))}
      \pgfmathsetmacro{\rightx}{1/sqrt(3)}

      \fill[plotpoint] (\leftx, 0.5) circle[radius=2.5pt];
      \fill[plotpoint] (\leftx,-0.5) circle[radius=2.5pt];
      \fill[plotpoint] (\rightx,0) circle[radius=2.5pt];
      \fill[plotpoint] (0,0) circle[radius=2.5pt];
    \end{tikzpicture}%
  }

  \captionof{figure}{Equilateral triangle with a center configuration.}
  \label{fig:triangle-center}
\end{minipage}

\par
}%

The maximizer of the Hamiltonian is not of interest, since the Hamiltonian diverges to $+\infty$ when two vortices collide.
We therefore exclude collision configurations and restrict our discussion to the region where the Hamiltonian is finite.

The corresponding problem for three identical point vortices is much simpler, since the system is integrable.
The relative equilibria of the identical three vortex problem consist of equilateral and collinear configurations, where the equilateral configuration is the global minimum and the collinear configuration is a saddle point. (The global minimality of the equilateral configuration can be easily shown by the AM-GM inequality.)
Their classification and stability properties were developed in the works of Synge \cite{Synge1949}, Tavantzis and Ting \cite{TavantzisTing1988}, and Aref \cite{Aref2009}.

However, the four-vortex problem is not integrable, and the classification of its relative equilibria is a more challenging problem.
Determining the exact variational nature of these relative equilibria provides a fundamental step toward understanding the geometry of the underlying phase space.
Characterizing all stationary configurations yields important information about the structure of its high-dimensional energy landscape.
In particular, identifying the global minimizer is of special interest, as global optimization in such a high-dimensional nonlinear setting is generally a challenging problem.

\begin{remark} [Conjecture on five-vortex system]
It is natural to expect that the two configurations that Mayer found on five magnets are the only stable configurations in the five-vortex system.
The stability of the regular pentagon is well known, as we can see in the result of the Kurakin and Yodovich \cite{Kurakin2002}.
The square with a center configuration belongs to the family consisting of a regular $n$-gon together with a central vortex, whose relative equilibria and stability were analyzed by Cabral and Schmidt \cite{CabralSchmidt2000}.
In particular, the square with a center configuration is a stable relative equilibrium of the five-vortex system.
There may be other relative equilibria of the five-vortex system such as collinear configurations, but they are expected to be unstable saddle points.
\end{remark}

The distinctive contribution of this paper is twofold. First, we provide an independent, purely analytical classification of these configurations using a Hadamard-coordinate reduction, offering an elegant alternative to previous classifications that relied heavily on computational algebraic geometry. Second, and more importantly, we completely determine the variational nature of each configuration. By establishing the square as the unique global minimizer in the constrained Hamiltonian landscape, our results provide a rigorous mathematical justification for physical phenomena, such as the stable square patterns observed in Mayer's floating-magnet experiments.

\subsection{Previous works on vortex problems}
Relative equilibria are among the simplest coherent motions of a vortex system and have been studied from dynamical, variational, topological, and algebraic points of view.
 The regular vortex polygon is the best-known family. 
 The classical investigations of Thomson and Havelock initiated the study of its stability, and Kurakin and Yudovich proved nonlinear stability for the regular $n$-gon with identical circulations when $n\leq 7$ and instability when $n\geq 8$ \cite{Kurakin2002}. 
 Polygonal configurations with an additional central vortex were studied by Cabral and Schmidt, who analyzed their Lyapunov stability and bifurcations \cite{CabralSchmidt2000}. 
 This latter family includes, when $n=3$, the equilateral triangle with a vortex at its center that appears in our classification.

A complementary line of research treats relative equilibria as critical points of the Hamiltonian under conserved-quantity constraints. 
Palmore used critical-point and topological methods to obtain multiplicity results for positive circulations \cite{Palmore1982}. 
Such results establish the existence of many relative equilibria, but they do not by themselves provide an explicit classification for a fixed number of vortices. 
The present paper addresses this more concrete classification problem in the special case of four identical circulations.

The four-vortex problem has been studied directly by several algebraic and computational methods. 
O'Neil converted the equations for relative equilibria and collapse configurations into polynomial systems and obtained finiteness results and upper bounds for general circulations \cite{ONeil2007}. 
Hampton and Moeckel subsequently proved finiteness results for relative equilibria, absolute equilibria, and rigidly translating configurations using exact symbolic computation and algebraic geometry \cite{HamptonMoeckel2009}.
More recently, Yu established finiteness results for stationary configurations of the planar four-vortex problem, including rotating, translating, equilibrium, and collapse configurations \cite{Yu2023}.
This comprehensive result provided a definitive mathematical proof that the total number of any such configurations is strictly bounded for arbitrary circulations.
Our work pushes beyond finiteness to provide a complete and explicit geometric classification for the special case of identical vortices.

Several works have also exploited partial equality among the circulations. 
P\'erez-Chavela, Santoprete, and Tamayo studied symmetric relative equilibria when three circulations are equal, determining solution counts and bifurcation values as the fourth circulation varies \cite{PerezChavelaSantopreteTamayo2015}. 
Menezes and Roberts classified collinear relative equilibria with three equal circulations and analyzed their bifurcations and linear stability \cite{MenezesRoberts2018}. 
These studies show that equality of circulations creates additional symmetry that can substantially simplify the four-vortex equations. 
Recent work has further investigated the possible geometries of general four-vortex relative equilibria through a parameterized polynomial formulation \cite{KallyadanShukla2025}. 
In that broader setting, the circulations are part of the configuration problem, whereas here all four circulations are fixed and identical.

The specific case of four identical vorticities was previously analyzed as a subcase within a broader study by Hampton, Roberts, and Santoprete \cite{HamptonRobertsSantoprete2014}. 
They investigated the relative equilibria of the four-vortex problem where two pairs of vortices have equal strengths. 
Using techniques from computational algebraic geometry, such as Gr\"obner bases, they showed that when all four vorticities are equal, exactly three geometrically distinct configurations exist: a square, an equilateral triangle with a central vortex, and a collinear configuration. 
In contrast to their computational methods, our work introduces a purely mathematical approach by exploiting the special algebraic structure of the system, completely characterizing the variational nature of each configuration.

\subsection{Organization of the paper}

This paper is organized as follows. In Section 2, we narrow the problem using previous results.
We formulate the precise problem in Section 3.
In Section 4, we introduce the Hadamard transformation and reduce the four-vortex problem to an equivalent three-point problem.
We find all possible stationary configurations in Section 5 and characterize their energetic properties in Section 6.
Finally, we prove the main theorem in Section 7.

\section{Stationary Configurations of Identical Vortices}

In this section, we recall previous results on the classification of stationary configurations.
These results substantially reduce the problem and allow us to focus on relative equilibria.

\begin{proposition}\label{prop:sc_re}
Let $(z_1, z_2, z_3, z_4) \in \mathbb{C}^4$ be a configuration with identical intensities $\Gamma \neq 0$.
Then the configuration is a stationary configuration if and only if it is a relative equilibrium.
\end{proposition}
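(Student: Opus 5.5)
The plan is to follow O'Neil's argument for identical vortices, working directly from the equations of motion in complex form. With $\Gamma_i=\Gamma$, the Kirchhoff equations read
\[
\dot{\bar z}_i = \frac{\Gamma}{2\pi i}\sum_{j\neq i}\frac{1}{z_i-z_j}.
\]
Summing over $i$ gives $\sum_i \dot z_i=0$, so the center $\bar z$ is fixed.

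\textbf{The easy direction.} A relative equilibrium satisfies $\dot z_i - \dot z_j = i\omega(z_i-z_j)$. This is exactly the stationary condition with constant $i\omega$.

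\textbf{The converse: averaging the definition.} Assume $\dot z_i-\dot z_j=\omega(z_i-z_j)$ for some $\omega\in\mathbb{C}$. Averaging over $j$ and using $\sum_j\dot z_j=0$ gives
\[
\dot z_i=\omega(z_i-\bar z)\quad\text{for all } i.
\]

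\textbf{Showing $\omega$ is purely imaginary.} Multiply the identity $\dot z_i=\omega(z_i-\bar z)$ by $(\bar z_i-\bar{\bar z})$ and sum over $i$. The right-hand side becomes $\omega I$. For the left-hand side, I use the conjugated equation of motion
\[
\dot z_i=-\frac{\Gamma}{2\pi i}\sum_{j\ne i}\frac{1}{\bar z_i-\bar z_j}.
\]
Substituting, the left-hand side becomes
\[
-\frac{\Gamma}{2\pi i}\sum_{i\neq j}\frac{\bar z_i-\bar{\bar z}}{\bar z_i-\bar z_j}.
\]
I then symmetrize over the pairs $(i,j)$ and $(j,i)$. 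For each pair the sum of the two terms is
\[
\frac{(\bar z_i-\bar{\bar z})-(\bar z_j-\bar{\bar z})}{\bar z_i-\bar z_j}=1.
\]
There are $\binom{4}{2}=6$ pairs, so
\[
\omega I=-\frac{6\Gamma}{2\pi i}=\frac{3\Gamma i}{\pi}.
\]
Since $I>0$, this forces $\omega = i\,\frac{3\Gamma}{\pi I}$. So $\omega$ is purely imaginary and nonzero because $\Gamma\neq 0$, which matches the angular velocity $\frac{3\Gamma}{\pi I}$ in Theorem \ref{maintheorem}.

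\textbf{Conclusion and main obstacle.} Writing $\omega=i\omega'$ with $\omega'\in\mathbb{R}\setminus\{0\}$ and $z_0=\bar z$, we get $\dot z_i=i\omega'(z_i-z_0)$, which is a relative equilibrium. The only delicate step is the bookkeeping of conjugates and signs in the symmetrization: it must yield a real multiple of $i$ with no leftover real part. The crux is that pairwise symmetrization turns every term into the constant $1$. I would double-check the sign convention against the Kirchhoff equations stated above, but the sign does not affect the conclusion.
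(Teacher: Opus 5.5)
Your proof is correct, but it takes a different route from the paper. The paper argues by citation. O'Neil's classification puts every stationary configuration into one of four types: equilibrium, rigidly translating, relative equilibrium, or collapse. The other three types require $L=0$ or $\sigma=0$, and here $\sigma=4\Gamma\neq0$ and $L=6\Gamma^2\neq0$, so only relative equilibria remain.

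You instead prove the statement from scratch, and your two steps mirror O'Neil's two necessary conditions:
\begin{itemize}
\item The averaging step uses $\sum_i\dot z_i=0$, which is where $\sigma\neq0$ enters. It excludes rigid translation, since with $\omega=0$ all velocities vanish.
\item The pairing with $\overline{z_i-\bar z}$ and the symmetrization give $\omega I = iL/(2\pi\Gamma)=3\Gamma i/\pi$, which is where $L\neq0$ enters. This identity excludes equilibria ($\omega=0$) and collapse ($\operatorname{Re}\omega\neq0$) at once.
\end{itemize}

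Your version is self-contained and does not depend on importing the four-type classification. It also yields the angular velocity $3\Gamma/(\pi I)$ of Lemma~\ref{lemma:angular-velocity} directly, without the Lagrange-multiplier computation that precedes that lemma. The paper's route is shorter and places the result in the general-circulation picture.

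The sign you were unsure of is right. The paper's Kirchhoff equations give $\dot z_i=\frac{i\Gamma}{2\pi}\sum_{j\neq i}(\bar z_i-\bar z_j)^{-1}$, which is exactly your conjugated equation, and your value of $\omega$ matches the paper's. A small suggestion: rename the center (say $c$) so that $\bar z$ does not mean both the center and a complex conjugate.
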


In \cite{ONeil1987}, O'Neil proved that stationary configurations in the $N$-vortex problem can be classified into four types: equilibria, rigidly translating configurations, relative equilibria, and collapse configurations.
The necessary conditions established by O'Neil are summarized
in Table \ref{tab:stationary_conditions}.
$\sigma = \sum_{i} \Gamma_i$ and $L = \sum_{i<j} \Gamma_i \Gamma_j$ are the total circulation and the total vortex angular momentum, respectively.

\begin{table}[htbp]
\centering
\renewcommand{\arraystretch}{1.3}
\begin{tabular}{|l|l|}
\hline
\textbf{Stationary Configuration Type} & \textbf{Necessary Condition} \\ \hline
Equilibria & $L = 0$ \\ \hline
Rigidly translating configurations & $\sigma = 0$ \\ \hline
Relative equilibria & None for $L$ or $\sigma$ (can exist for $L \neq 0, \sigma \neq 0$) \\ \hline
Collapse configurations & $L = 0$ \\ \hline
\end{tabular}
\caption{Necessary conditions for the four types of stationary configurations.}
\label{tab:stationary_conditions}
\end{table}

\begin{proof}[Proof of Proposition~\ref{prop:sc_re}]

When the intensities are identical, with $\Gamma \neq 0$, we have $\sigma = 4\Gamma$ and $L = 6\Gamma^2$, both of which are nonzero.
Thus, relative equilibria are the only possible type of stationary configuration in the identical-vortex system.
By the classification of stationary configurations of O'Neil, the stationary configurations of the identical-vortex system are exactly the relative equilibria.

\end{proof}

\section{Variational Formulation on the Reduced Phase Space}

\subsection{Relative equilibria as critical points of the Hamiltonian}

It is a classical result in Hamiltonian dynamics that relative equilibria correspond to critical points of the Hamiltonian restricted to the level sets of conserved quantities (see, e.g., \cite{Palmore1982}).
For completeness, we identify the Lagrange multipliers and establish a criterion for critical points on phase space.

Let $\mathbf{z} = (z_1, z_2, z_3, z_4) \in \mathbb{C}^4$ be a relative equilibrium with center $z_c \in \mathbb{C}$ and impulse $I \in \mathbb{R}^+$.
By translational invariance, we can assume without loss of generality that $z_c = 0$, so that $I = \sum_{i=1}^4 |z_i|^2$.
When all vortex intensities are identical, with $\Gamma \neq 0$, the Hamiltonian simplifies to

\[
W = -\frac{1}{2\pi} \Gamma^2 \sum_{1 \le i < j \le 4} \log |z_i - z_j|
\]
 
A relative equilibrium $\mathbf{z}$ (consider $\mathbf{z} = (x_1, x_2, x_3, x_4, y_1, y_2, y_3, y_4) \in \mathbb{R}^8$) rotates uniformly with constant angular velocity $\omega$, so that $\dot{\mathbf{z}} = \omega J \mathbf{z}$ where $J = \begin{pmatrix} 0 & -I_4 \\ I_4 & 0 \end{pmatrix}$.
Comparing this with the Kirchhoff equations $\Gamma \dot{\mathbf{z}} = - J \nabla W(\mathbf{z})$ and using $\nabla I(\mathbf{z}) = 2\mathbf{z}$, we obtain
\[
\nabla \left( W + \frac{1}{2}\Gamma \omega I \right) = 0.
\]
By the Lagrange multiplier theorem, a critical point $\mathbf{z} \in \mathcal{Z}$ satisfies $\nabla W = \lambda_1 \nabla I + \boldsymbol{\lambda}_c$ for some $\lambda_1 \in \mathbb{R}$ and $\boldsymbol{\lambda}_c \in \mathbb{R}^2$ associated with the center of the configuration constraints.
Because $W$ is translation-invariant, summing the components gives $\boldsymbol{\lambda}_c = 0$, leaving $\nabla W = 2\lambda_1 \mathbf{z}$ with $\displaystyle{\omega = - \frac{2}{\Gamma} \lambda_1}$.
So a configuration is a relative equilibrium if and only if it is a critical point of the Hamiltonian.

To compute the multiplier explicitly, let
\[
G_i := \sum_{j \neq i} \frac{z_i - z_j}{|z_i - z_j|^2}, \quad i = 1, 2, 3, 4.
\]
Then $\displaystyle{\nabla_{z_i} W = -\frac{\Gamma^2}{2\pi} G_i}$. 
The condition $\nabla W = 2\lambda_1 \mathbf{z}$ is equivalent to $\displaystyle{G_i = - \frac{4\pi \lambda_1}{\Gamma^2} z_i}$.
Taking the inner product with $z_i$ and summing over $i$, we have
\[
-\frac{4 \pi \lambda_1}{\Gamma^2} \sum_{i=1}^4 |z_i|^2 = \sum_{i=1}^4 z_i \cdot G_i = \sum_{1 \le i < j \le 4} \frac{|z_i - z_j|^2}{|z_i - z_j|^2} = 6.
\]
Since $\sum_{i=1}^4 |z_i|^2 = I$, we find $\displaystyle{\lambda_1 = -\frac{3 \Gamma^2}{2\pi I}}$ and the angular velocity $\displaystyle{\omega = \frac{3 \Gamma}{\pi I}}$.
Thus, we obtain the angular velocity of a relative equilibrium.

\begin{lemma}\label{lemma:angular-velocity}
Let $(z_1, z_2, z_3, z_4) \in \mathbb{C}^4$ be a relative equilibrium, with center $z_c \in \mathbb{C}$, impulse $I \in \mathbb{R}^+$, and identical intensity $\Gamma \neq 0$. 
Then the angular velocity of the configuration is $\displaystyle{\omega = \frac{3 \Gamma}{\pi I}}$.
\end{lemma}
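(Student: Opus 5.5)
We will prove Lemma~\ref{lemma:angular-velocity} directly from the equations of motion, essentially formalizing the computation just preceding the statement, but checking signs and normalizations carefully. By translation invariance of $W$ and of the dynamics (the center is conserved for identical vortices), we first reduce to $z_c=0$. In that case $I=\sum_i |z_i|^2$. For a relative equilibrium with $\omega\neq 0$, summing $\dot z_i=i\omega(z_i-z_0)$ over $i$ and using $\sum_i\dot z_i=0$ (conservation of the center) gives $z_0=z_c=0$. Hence $\dot z_i=i\omega z_i$ for each $i$.

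Next we rewrite the Kirchhoff equations in complex form. With $\nabla_{z_i}W=-\frac{\Gamma^2}{2\pi}G_i$, where $G_i$ is viewed as a planar vector, the equations $\Gamma\dot x_i=\partial_{y_i}W$ and $\Gamma\dot y_i=-\partial_{x_i}W$ say that $\Gamma\dot z_i=-i(\partial_{x_i}W+i\,\partial_{y_i}W)$. This gives
\[
\Gamma \dot z_i = \frac{i\Gamma^2}{2\pi}G_i .
\]
Combining this with $\dot z_i=i\omega z_i$ yields the vector identity
\[
G_i=\frac{2\pi\omega}{\Gamma}z_i, \qquad i=1,\dots,4.
\]
It matches the multiplier relation above, with $-4\pi\lambda_1/\Gamma^2=2\pi\omega/\Gamma$, that is, $\omega=-2\lambda_1/\Gamma$.

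The key step is the virial-type identity. We take the real inner product of both sides with $z_i$ and sum over $i$. On the left we symmetrize over pairs:
\[
\sum_i z_i\cdot G_i=\sum_{i\neq j}\frac{z_i\cdot(z_i-z_j)}{|z_i-z_j|^2}=\sum_{i<j}\frac{(z_i-z_j)\cdot(z_i-z_j)}{|z_i-z_j|^2}=\binom{4}{2}=6 .
\]
The right side equals $\frac{2\pi\omega}{\Gamma}I$. Therefore $\omega=\frac{3\Gamma}{\pi I}$. This value is nonzero and real, which is consistent with the definition of a relative equilibrium.

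The only real obstacle is bookkeeping: fixing the sign convention relating $\nabla W$ to the complex velocity, so that $\omega$ comes out positive for $\Gamma>0$. We would double-check it on the square. There each $G_i$ is computable by hand: with radius $r$, one gets $G_1=\frac{3}{2r^2}z_1$, and $I=4r^2$ then gives $\omega=\frac{\Gamma}{2\pi}\cdot\frac{3}{r^2}\cdot\frac12\cdot 2=\frac{3\Gamma}{\pi I}$, which is consistent with the formula.
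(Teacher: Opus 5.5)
Your proof is correct and is essentially the paper's argument: you combine the Kirchhoff equations with uniform rotation to get $G_i=\frac{2\pi\omega}{\Gamma}z_i$, where the paper phrases the same relation through the multiplier $\lambda_1$ with $\omega=-2\lambda_1/\Gamma$; you also justify $z_0=z_c$, which the paper leaves implicit, and then apply the same virial identity $\sum_i z_i\cdot G_i=6$. The one slip is in your optional square check, where the displayed product $\frac{\Gamma}{2\pi}\cdot\frac{3}{r^2}\cdot\frac12\cdot 2$ equals $\frac{3\Gamma}{2\pi r^2}$, twice the correct value $\frac{\Gamma}{2\pi}\cdot\frac{3}{2r^2}=\frac{3\Gamma}{4\pi r^2}=\frac{3\Gamma}{\pi I}$; this does not affect the main argument.
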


\begin{proof}
The proof is a direct consequence of the above discussion.
\end{proof}

We also obtain a useful criterion for determining whether a
configuration is a relative equilibrium.
The lemma below is used to check candidate relative equilibria in later sections.

\begin{lemma}[Critical point criterion]\label{lemma:critical-point-criterion}
A configuration $\mathbf{z} = (z_1, z_2, z_3, z_4) \in \mathcal{Z}$ with center $0 \in \mathbb{C}$ and impulse $I \in \mathbb{R}^+$ is a relative equilibrium if and only if it satisfies
\[
G_i = \frac{6}{I} z_i, \quad i = 1, 2, 3, 4.
\]
\end{lemma}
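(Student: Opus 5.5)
We prove both directions by combining the Lagrange multiplier discussion above with Lemma~\ref{lemma:angular-velocity}. By translational invariance we keep the normalization $z_c = 0$, so that $I = \sum_i |z_i|^2$.

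\emph{Necessity.} Suppose $\mathbf{z}$ is a relative equilibrium. The preceding discussion shows that it satisfies $\nabla W = 2\lambda_1 \mathbf{z}$. Since $\nabla_{z_i} W = -\frac{\Gamma^2}{2\pi} G_i$, this says
\[
G_i = -\frac{4\pi\lambda_1}{\Gamma^2} z_i, \qquad i = 1, 2, 3, 4.
\]
Summing the inner products $z_i \cdot G_i$ over $i$ gives $\sum_{i<j} 1 = 6$. This forces $-\frac{4\pi\lambda_1}{\Gamma^2} I = 6$, so the proportionality constant is exactly $6/I$, and we obtain $G_i = \frac{6}{I} z_i$.

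\emph{Sufficiency.} Suppose $G_i = \frac{6}{I} z_i$ for all $i$. Set $\omega = \frac{3\Gamma}{\pi I}$. The Kirchhoff equations, written in the complex form, give $\dot z_i = -\frac{i\Gamma}{2\pi} G_i$; the sign depends on the orientation convention for $J$, and we will fix it consistently with $\Gamma \dot{\mathbf z} = -J\nabla W$. Substituting the hypothesis yields
\[
\dot z_i = -\frac{i\Gamma}{2\pi}\cdot\frac{6}{I} z_i ,
\]
which up to that orientation sign equals $i\omega z_i$ with $z_0 = 0$. Since $\Gamma \neq 0$ and $I > 0$, we have $\omega \neq 0$, so $\mathbf z$ satisfies the definition of a relative equilibrium.

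Equivalently, the hypothesis gives $\nabla W = -\frac{\Gamma^2}{2\pi}\cdot\frac{6}{I}\mathbf z = 2\lambda_1 \mathbf z$ with $\lambda_1 = -\frac{3\Gamma^2}{2\pi I}$. Hence $\nabla\bigl(W + \tfrac12\Gamma\omega I\bigr) = 0$, and this is precisely the rigid rotation condition derived above.

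\emph{Main obstacle.} The only delicate point is bookkeeping of signs and of the identification between the real gradient in $\mathbb{R}^8$ and the complex vectors $G_i$. Both $\nabla_{z_i}$ of $\log|z_i - z_j|$ and the rotation $J$ must be matched consistently, so that the constant comes out as $+6/I$ and $\omega$ agrees with Lemma~\ref{lemma:angular-velocity}. I would verify this by writing $\partial_{x_i}$ and $\partial_{y_i}$ of $W$ explicitly. Since the problem is invariant under reflection, a global sign convention does not affect the criterion itself.
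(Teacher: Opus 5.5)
Your proposal is correct and follows the paper's route. Necessity uses the multiplier identity $\nabla W = 2\lambda_1\mathbf z$ together with $\sum_i z_i\cdot G_i = 6$, which fixes the real proportionality constant at $6/I$; sufficiency reads that identity back through the Kirchhoff equations. The sign you hedged on comes out in your favour: $\Gamma\dot x_i = \partial W/\partial y_i$ and $\Gamma\dot y_i = -\partial W/\partial x_i$ give $\dot z_i = +\frac{i\Gamma}{2\pi}G_i$, hence exactly $\dot z_i = i\omega z_i$ with $\omega = \frac{3\Gamma}{\pi I}$. The one step that both you and the paper leave tacit is $z_0 = 0$ in the necessity direction, which follows by summing $\dot z_i = i\omega(z_i - z_0)$ over $i$ and using $\sum_i G_i = 0$.
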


\begin{proof}
From the above discussion, substituting $\displaystyle{\lambda_1 = -\frac{3 \Gamma^2}{2\pi I}}$ into $\displaystyle{G_i = - \frac{4 \pi \lambda_1}{\Gamma^2} z_i, \quad i = 1, 2, 3, 4}$, we have the desired result.
\end{proof}

\begin{remark}
In fact, the critical points of $W$ on $\mathcal{Z}$ occur in orbits.
Since we have not removed rotational symmetry in $\mathcal{Z}$, if $\mathbf{z}$ is a critical point of $W$, every rotation of $\mathbf{z}$ is also a critical point of $W$.
It is natural since the relative equilibrium is a configuration that rotates uniformly, so the orbit of the configuration is also a relative equilibrium.
\end{remark}

\subsection{Reduction of the phase space}

By Noether's theorem, the invariance of the point-vortex Hamiltonian under spatial translations and rotations guarantees the conservation of the center $\bar{z} = \frac{1}{4}\sum_{i=1}^4 z_i$ and the impulse $I = \sum_{i=1}^4 |z_i - \bar{z}|^2$, respectively.
Due to the translational and scaling symmetries of the equations of motion, any relative equilibrium can be mapped to a canonical one by a suitable spatial translation, a scaling transformation, and a time reparameterization. 
Therefore, without loss of generality, it is sufficient to set $\Gamma = 1$, $\bar{z} = 0$, and $I = 1$.

Under these normalizations, the search for relative equilibria is naturally restricted to the reduced phase space $\mathcal{Z}$, defined as

\[
\mathcal{Z} = \left\{ (z_1, z_2, z_3, z_4) \in \mathbb{C}^4 : \sum_{i=1}^4 z_i = 0, \sum_{i=1}^4 |z_i - \bar{z}|^2 = 1, |z_i - z_j| > 0 \text{ for all } i \neq j \right\}
\]

$|z_i - z_j| > 0 \text{ for all } i \neq j$ condition is added since otherwise the Hamiltonian diverges to $+\infty$, which is not of interest.
Once the relative equilibria in $\mathcal{Z}$ are classified, the complete set of solutions for arbitrary $\bar{z}, I$, and $\Gamma$ can be recovered by applying the corresponding inverse transformations.

\section{The Equivalent Three-point Problem via Hadamard Transformation}

We introduce a useful transformation using Hadamard matrix of order 4, which enables us to reduce the four-vortex problem to an equivalent three-point problem.
This transformation separates the implicit constraint from the problem.

\subsection{Hadamard transformation}

In this section, we introduce a useful transformation of the coordinates of the four vortices.
A Hadamard matrix of order 4 ($\mathbf{H}_4$) is a $4 \times 4$ matrix whose entries are either $+1$ or $-1$, and whose rows are mutually orthogonal.
Applying this linear transformation to $z_1, z_2, z_3, z_4$, we obtain a new set of coordinates.

\[
\begin{pmatrix}
0 \\
u \\
v \\
w
\end{pmatrix}
=
\mathbf{H}_4
\begin{pmatrix}
z_1 \\
z_2 \\
z_3 \\
z_4
\end{pmatrix}
=
\begin{pmatrix}
1 & 1 & 1 & 1 \\
1 & 1 & -1 & -1 \\
1 & -1 & 1 & -1 \\
1 & -1 & -1 & 1
\end{pmatrix}
\begin{pmatrix}
z_1 \\
z_2 \\
z_3 \\
z_4
\end{pmatrix}.
\]

The first coordinate is always 0 in $\mathcal{Z}$; thus we can isolate the constraint.
We can express $z_1, z_2, z_3, z_4$ in terms of $u, v, w$ as follows.

\[
z_1 = \frac{1}{4}(u + v + w), \quad
z_2 = \frac{1}{4}(u - v - w), \quad
z_3 = \frac{1}{4}(-u + v - w), \quad
z_4 = \frac{1}{4}(-u - v + w).
\]

We can now rewrite the constraints and the Hamiltonian in terms of $u,v,w$.
First, the constraint from the translational symmetry is automatically satisfied.
The normalization of the angular impulse can be rewritten as

\[
|u|^2 + |v|^2 + |w|^2 = \mathbf{z}^{*} \mathbf{H}_4^{*} \mathbf{H}_4 \mathbf{z} = 4 \mathbf{z}^{*} \mathbf{z} = 4(\sum_{i=1}^4 |z_i|^2) = 4.
\]

We used $\mathbf{H}_4^{*} \mathbf{H}_4 = 4 \mathbf{I}$.
Rewriting the Hamiltonian in terms of $u, v, w$, we have

\[
W = - \frac{1}{2\pi} \log \left( \frac{1}{64} |u^2 - v^2| |v^2 - w^2| |w^2 - u^2| \right).
\]

This expression is obtained by substituting the expressions for $z_1,z_2,z_3,z_4$ in terms of $u,v,w$.
The simplicity of the expression is also a crucial advantage of this transformation.

\subsection{Equivalent three-point formulation}

With the above transformation, we reduce the problem to a three-point problem.
Set $a=u^2$, $b=v^2$, and $c=w^2$. The three points $a,b,c \in \mathbb{C}$ satisfy the constraint $|a|+|b|+|c|=4$.
Thus, the Hamiltonian $W$ and the state space $\mathcal{M}$ can be rewritten as follows.

\[
W = - \frac{1}{2\pi} \log \left( \frac{1}{64} |a-b| |b-c| |c-a| \right)
\]

\[
\mathcal{M} = \left\{ (a, b, c) \in \mathbb{C}^3 : |a|+|b|+|c| = 4, |a-b||b-c||c-a| \neq 0 \right\}
\]

The problem is now converted to a three-point problem, which makes the analysis much easier.
We work primarily with $\Pi := |a-b| |b-c| |c-a|$ instead of $W$.
The critical point discussion of $\Pi$ is equivalent to that of $W$ in $\mathcal{M}$.

\begin{table}[ht]
    \centering
    \renewcommand{\arraystretch}{2}
    \begin{tabularx}{\textwidth}{
        |>{\centering\arraybackslash}X
        |>{\centering\arraybackslash}X|
    }
        \hline
        $\mathcal{Z}$ & $\mathcal{M}$ \\
        \hline

        Permutation of
        $(z_1, z_2, z_3, z_4)$
        &
        Permutation of
        $(a, b, c)$
        \\
        \hline

        Global rotation of $\phi$
        &
        Global rotation of $2\phi$
        \\
        \hline

        Reflection about the $x$-axis
        &
        Reflection about the $x$-axis
        \\
        \hline
    \end{tabularx}

    \caption{Symmetry correspondence table}
    \label{tab:symmetry-correspondence}
\end{table}

Table \ref{tab:symmetry-correspondence} shows how the symmetries of $\mathcal{Z}$ correspond to those of the three-point problem in $\mathcal{M}$.
The conversion of the permutation can be understood via the factor group isomorphism of $S_4 / V_4 \cong S_3$, where $V_4$ is the Klein four-group.
The Hamiltonian in $\mathcal{M}$ has the same rotational and reflection symmetries as in $\mathcal{Z}$.

We can transfer the critical-point condition from the original formulation.
The following lemma establishes this transfer using the diffeomorphism defined by the transformation.

\begin{lemma}\label{lemma:diffeomorphism}

Let $(z_1, z_2, z_3, z_4) \in \mathcal{Z}$ and $(a, b, c) \in \mathcal{M}$ be the corresponding point.
If $abc \neq 0$, $(z_1, z_2, z_3, z_4)$ is a critical point of $W$ if and only if $(a, b, c)$ is a critical point of $\Pi$ in $\mathcal{M}$.

\end{lemma}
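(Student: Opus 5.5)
The plan is to factor the map $\mathcal{Z}\to\mathcal{M}$ into two steps and show that each one is a local diffeomorphism between the constraint manifolds, wherever $abc\neq 0$. The key observation is that $W = -\frac{1}{2\pi}\log\bigl(\frac{1}{64}\Pi\bigr)$, and $\log$ is strictly increasing with nonvanishing derivative on $(0,\infty)$. So $W\circ\Phi^{-1}$ and $\Pi$ have the same critical points on $\mathcal{M}$. Critical points are preserved under local diffeomorphisms, by the chain rule applied to tangent spaces. The whole lemma therefore reduces to showing that the map $\Phi:(z_1,z_2,z_3,z_4)\mapsto(a,b,c)$ restricts to a local diffeomorphism $\mathcal{Z}\cap\{abc\neq0\}\to\mathcal{M}\cap\{abc\neq 0\}$.

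\textbf{Step 1 (linear part).} The Hadamard map $\mathbf{z}\mapsto(u,v,w)$ is a real-linear isomorphism from $\{\sum z_i=0\}\cong\mathbb{C}^3$ onto $\mathbb{C}^3$, since $\mathbf{H}_4^*\mathbf{H}_4=4\mathbf{I}$. It carries the sphere $\sum|z_i|^2=1$ onto the sphere $|u|^2+|v|^2+|w|^2=4$. Both sets are smooth hypersurfaces, and a linear isomorphism restricts to a diffeomorphism between them. The collision condition $|z_i-z_j|>0$ corresponds to $u^2\neq v^2$, $v^2\neq w^2$, and $w^2\neq u^2$; for example, $z_1-z_2=\frac12(v+w)$ and $z_3-z_4=\frac12(v-w)$. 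This matches the condition $\Pi\neq 0$.

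\textbf{Step 2 (squaring).} Next consider $\Psi(u,v,w)=(u^2,v^2,w^2)$. Viewed as a map $\mathbb{C}^3\to\mathbb{C}^3$, its real Jacobian is block diagonal with blocks equal to multiplication by $2u$, $2v$, $2w$. It is therefore invertible exactly when $uvw\neq0$, which is equivalent to $abc\neq 0$. Using polar coordinates $u=re^{i\theta}$ with $r>0$, we get $a=r^2e^{2i\theta}$. The constraint function changes as follows:
\[ |u|^2+|v|^2+|w|^2 = |a|+|b|+|c|. \]
So $\Psi$ maps the sphere $S=\{|u|^2+|v|^2+|w|^2=4\}$ into $\mathcal{M}$, and the function defining $\mathcal{M}$ is smooth near points where $abc\neq0$. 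Write $f(u,v,w)=|u|^2+|v|^2+|w|^2$ and $g(a,b,c)=|a|+|b|+|c|$. Then $f=g\circ\Psi$. Because $d\Psi$ is invertible, $d\Psi$ maps $\ker df$ isomorphically onto $\ker dg$. Hence $\Psi|_S$ is a local diffeomorphism onto $\mathcal{M}\cap\{abc\neq0\}$, by the inverse function theorem on manifolds. We should also check that $dg\neq 0$ there (it is the radial derivative, which is nonzero), so that $\mathcal{M}$ is a genuine smooth hypersurface near such points.

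\textbf{Step 3 (conclusion).} The composite $\Phi=\Psi\circ H$ is a local diffeomorphism at every point with $abc\neq0$. We also have $W=\tilde W\circ\Phi$, where $\tilde W=-\frac1{2\pi}\log(\Pi/64)$. At a point $p$, $dW_p=d\tilde W_{\Phi(p)}\circ d\Phi_p$ on tangent spaces. Since $d\Phi_p$ is bijective, $dW_p=0$ if and only if $d\tilde W_{\Phi(p)}=0$. Finally, $d\tilde W=-\frac{1}{2\pi}\,\Pi^{-1}d\Pi$ with $\Pi>0$, so this holds if and only if $d\Pi_{\Phi(p)}=0$ on $T\mathcal{M}$.

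The main obstacle is Step 2. It requires care on two points: that the level set $|a|+|b|+|c|=4$ is smooth only away from $abc=0$ (which is why the hypothesis is needed), and that the tangent spaces correspond under $d\Psi$. I would handle both through the identity $f=g\circ\Psi$, as above.
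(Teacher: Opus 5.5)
Your proposal is correct and takes the same route as the paper: the map $(z_1,z_2,z_3,z_4)\mapsto(u^2,v^2,w^2)$ has invertible differential exactly when $uvw\neq 0$, so it is a local diffeomorphism, and hence critical points of $W$ on $\mathcal{Z}$ correspond to critical points of $\Pi$ on $\mathcal{M}$. Your write-up is more careful than the paper's: it makes explicit that the identity $|u|^2+|v|^2+|w|^2=|a|+|b|+|c|$ makes $d\Psi$ carry the tangent space of the sphere onto that of $\mathcal{M}$ (which is smooth only where $abc\neq 0$), and that $W$ and $\Pi$ share critical points because $\log$ has nonvanishing derivative.
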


\begin{proof}

Under the Hadamard transformation
\[
(0, u, v, w)^t = H_4(z_1, z_2, z_3, z_4)^t,
\]

define

\[
F: \mathcal{Z} \to \mathcal{M}, \qquad
F(z_1, z_2, z_3, z_4) = (u^2, v^2, w^2) = (a, b, c).
\]

If $abc \neq 0$, equivalently $uvw \neq 0$, then the Jacobian matrix of $F$ is invertible.
Thus, by the inverse function theorem, $F$ is a local diffeomorphism. 
Therefore, the critical point condition of $W$ on $\mathcal Z$ is equivalent to the that of $\Pi$ on $\mathcal M$.

\end{proof}

\section{Determination of Relative Equilibria}

We investigate two cases separately: $abc=0$ and $abc\neq 0$.
By Lemma \ref{lemma:diffeomorphism}, finding critical points of $\Pi$ in $\mathcal{M}$ is equivalent to finding critical points of $W$ in $\mathcal{Z}$ when $abc \neq 0$. 

\subsection{The case $abc = 0$: collinear and square configurations}

Without loss of generality, we can assume $b = 0, a \neq 0, c \neq 0$.
There is only one zero among $a,b,c$, since otherwise two points would coincide.
(If we set $a = 0$ or $c = 0$ instead of $b = 0$, this only permutes the points $(z_1, z_2, z_3, z_4)$ and gives the same result.)
Then we have $|a| + |c| = 4$ and $\Pi = |a||c||a-c|$.
From $b=0$, we obtain $v=0$, which gives the configuration a strong symmetry.
$v = z_1 - z_2 + z_3 - z_4 = 0$ and the constraint $z_1 + z_2 + z_3 + z_4 = 0$ give $z_1 + z_3 = z_2 + z_4 = 0$.

We focus on the reduced state spaces under $b=0$, which are

\[
\mathcal{Z'} := \left\{(z_1, z_2, z_3, z_4) \in \mathbb{C}^4 : z_1 + z_3 = 0, z_2 + z_4 = 0, \sum_{i=1}^4 |z_i|^2 = 1, |z_i - z_j| > 0 \text{ for all } i \neq j \right\}
\]

\[
\mathcal{M'} := \left\{(a, c) \in \mathbb{C}^2 : |a| + |c| = 4, |a||c||a - c| > 0 \right\}
\]

Since $\mathcal{Z}' \subset \mathcal{Z}$,

\[
d(W|_{\mathcal{Z}})_\mathbf{z} = 0 \implies d(W|_{\mathcal{Z'}})_\mathbf{z} = 0,
\]

We first find the critical points of $W$ on $\mathcal{Z'}$ and then check whether they are also critical points of $W$ on $\mathcal{Z}$.

As in Lemma \ref{lemma:diffeomorphism}, we obtain a local diffeomorphism $F': \mathcal{Z'} \to \mathcal{M'}$, defined by $F'(z_1,z_2,z_3,z_4)=(u^2,w^2)=(a,c)$.
Thus, we can transfer the critical-point condition for $W$ on $\mathcal{Z'}$ to that for $\Pi$ on $\mathcal{M'}$.

Let $|a| = r \in (0, 2]$, and let the angle between $a$ and $c$ be $\theta \in [0, \pi]$ so that $(a, c) = (r, (4-r)e^{i \theta})$.
(We assume $|a| \leq |c|$ and $a \in \mathbb{R}^+$ without loss of generality, since this only permutes the points or rotates the configuration.)
Then $\Pi = r (4-r) \sqrt{r^2 + (4-r)^2 - 2 r (4-r) \cos \theta}$.
By taking the derivative of $\Pi$ with respect to $\theta$,

\[
\frac{\partial \Pi}{\partial \theta} = r (4-r) \frac{r (4-r) \sin \theta}{\sqrt{r^2 + (4-r)^2 - 2 r (4-r) \cos \theta}} = 0
\]

and we get $\sin \theta = 0$, which gives $\theta = 0$ or $\theta = \pi$.

\paragraph{(1) $\theta = 0$}\mbox{}\\

If $\theta = 0$, then $\Pi = r (4-r) |r - (4-r)| = r (4-r) (4-2r)$.
Taking the derivative of $\Pi$ with respect to $r$,

\[
\frac{d\Pi}{dr} = 6r^2 - 24r + 16 = 0
\]

Thus we get $r = 2 - \frac{2}{\sqrt{3}}$.
This gives a stationary collinear configuration $\{(x_1, 0), (x_2, 0), (-x_1, 0), (-x_2, 0)\}$ in $\mathcal{Z}'$, where $\displaystyle{x_1 = \frac{1}{2\sqrt{3-\sqrt{6}}}}$ and $\displaystyle{x_2 = \frac{1}{2\sqrt{3+\sqrt{6}}}}$.
We get $G_i = 6 z_i$ for $i = 1, 2, 3, 4$, and thus this configuration is indeed a critical point of $W$ on $\mathcal{Z}$, by Lemma \ref{lemma:critical-point-criterion}.
The plot appears in Figure \ref{fig:Collinear}.

\begin{figure}[h]
    \centering
    \begin{tikzpicture}[scale=3]
        \draw[->] (-0.9,0) -- (0.9,0) node[right] {$x$};
        \draw[->] (0,-0.9) -- (0,0.9) node[above] {$y$};
        
        \draw ( 0.5, 0.02) -- ( 0.5,-0.02) node[below] {$\tfrac12$};
        \draw (-0.5, 0.02) -- (-0.5,-0.02) node[below] {$-\tfrac12$};
    
        \draw ( 0.02, 0.5) -- (-0.02, 0.5) node[left] {$\tfrac12$};
        \draw ( 0.02,-0.5) -- (-0.02,-0.5) node[left] {$-\tfrac12$};

        \pgfmathsetmacro{\xone}{
          0.25*(
            sqrt(2 + 2/sqrt(3))
            +
            sqrt(2 - 2/sqrt(3))
          )
        }

        \pgfmathsetmacro{\xtwo}{
          0.25*(
            sqrt(2 + 2/sqrt(3))
            -
            sqrt(2 - 2/sqrt(3))
          )
        }

        \foreach \x in {\xone,\xtwo,-\xone,-\xtwo}{
          \fill (\x,0) circle[radius=0.9pt];
        }

        \draw (-1.1, -1.1) rectangle (1.1, 1.1);
    \end{tikzpicture}
    \caption{Collinear configuration.}
    \label{fig:Collinear}
\end{figure}

\paragraph{(2) $\theta = \pi$}\mbox{}\\

If $\theta = \pi$, then $\Pi = r (4-r) |r + (4-r)| = 4 r (4-r)$.
Taking derivative of $\Pi$ with respect to $r$,

\[
\frac{\partial \Pi}{\partial r} = 16 - 8r = 0
\]

and we get $r = 2$.
This gives a stationary square configuration $\{ (\frac{1}{2}, 0), (0, \frac{1}{2}), (-\frac{1}{2}, 0), (0, -\frac{1}{2}) \}$.
We get $G_i = 6 z_i$ for $i = 1, 2, 3, 4$, and thus this configuration is indeed a critical point of $W$ on $\mathcal{Z}$, by Lemma \ref{lemma:critical-point-criterion}.
The plot appears in Figure \ref{fig:Square}.

\begin{figure}[h]
    \centering
    \begin{tikzpicture}[scale=3]
        \draw[->] (-0.9,0) -- (0.9,0) node[right] {$x$};
        \draw[->] (0,-0.9) -- (0,0.9) node[above] {$y$};
        
        \draw ( 0.5, 0.02) -- ( 0.5,-0.02) node[below] {$\tfrac12$};
        \draw (-0.5, 0.02) -- (-0.5,-0.02) node[below] {$-\tfrac12$};
    
        \draw ( 0.02, 0.5) -- (-0.02, 0.5) node[left] {$\tfrac12$};
        \draw ( 0.02,-0.5) -- (-0.02,-0.5) node[left] {$-\tfrac12$};

        \fill ( 0.5, 0) circle[radius=0.9pt];
        \fill (-0.5, 0) circle[radius=0.9pt];
        \fill (0,  0.5) circle[radius=0.9pt];
        \fill (0, -0.5) circle[radius=0.9pt];

        \draw (-1.1, -1.1) rectangle (1.1, 1.1);
    \end{tikzpicture}
    \caption{Square configuration.}
    \label{fig:Square}
\end{figure}

\subsection{The case $abc \neq 0$: equilateral triangle with a center}

\begin{lemma}\label{lemma:stationary_s}

Let $S(s) = |a - s| + |b - s| + |c - s|$ be a function of $s \in \mathbb{C}$, which is the sum of distances from $s$ to three points $a, b, c \in \mathbb{C}$.
If $(a, b, c) \in \mathcal{M}$ is a critical point which satisfy $abc \neq 0$, then $s = 0$ is a critical point of $S(s)$, i.e., $dS_0 = 0$.

\end{lemma}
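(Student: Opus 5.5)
Since $abc \neq 0$, none of $a,b,c$ is at the origin. Hence each function $|a|,|b|,|c|$ is smooth near the critical point, and so is the constraint $g(a,b,c) = |a|+|b|+|c|$. The approach is to apply the Lagrange multiplier condition to $\log \Pi$ on $\mathcal{M}$ and then test it against two specific tangent directions: a common translation and a common rotation of the three points.

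We have $\nabla g \neq 0$, since its component in $a$ is the unit vector $a/|a|$. So at a critical point of $\Pi$ (equivalently of $\log\Pi$) on $\mathcal{M}$ there is $\mu \in \mathbb{R}$ with $\nabla \log \Pi = \mu \nabla g$. Componentwise, writing gradients in $\mathbb{R}^2 \cong \mathbb{C}$, this reads
\[
\frac{a-b}{|a-b|^2} + \frac{a-c}{|a-c|^2} = \mu \frac{a}{|a|},
\]
together with the analogous equations for $b$ and $c$.

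\textbf{Translation direction.} Move $(a,b,c) \mapsto (a+\epsilon e, b+\epsilon e, c+\epsilon e)$ for a unit vector $e$. Then $\Pi$ is unchanged, so summing the three equations makes the left-hand side telescope to zero:
\[
\mu\left(\frac{a}{|a|} + \frac{b}{|b|} + \frac{c}{|c|}\right) = 0 .
\]
Observe that $\nabla S(0) = -\left(\frac{a}{|a|} + \frac{b}{|b|} + \frac{c}{|c|}\right)$. So it suffices to show $\mu \neq 0$.

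\textbf{Scaling direction.} Take the inner product of each equation with its own point and sum. On the left, each pair contributes $\frac{(a-b)\cdot(a-b)}{|a-b|^2} = 1$, giving $3$. On the right we get $\mu(|a|+|b|+|c|) = 4\mu$. Hence $\mu = 3/4 \neq 0$. This is the same Euler-homogeneity trick used earlier for $\lambda_1$.

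Combining the two directions gives $\frac{a}{|a|} + \frac{b}{|b|} + \frac{c}{|c|} = 0$, which is exactly $dS_0 = 0$.

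\textbf{Main obstacle.} The only delicate point is justifying the multiplier rule: $\mathcal{M}$ must be a smooth manifold near the point, which is why $abc \neq 0$ is needed. The rest is the two test directions above.
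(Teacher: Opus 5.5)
Your proof is correct. It rests on the same two facts as the paper's proof: $\Pi$ is translation invariant and homogeneous of degree $3$, while the constraint $|a|+|b|+|c|$ is homogeneous of degree $1$. The two arguments organize these facts differently.

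The paper avoids the multiplier rule altogether. It fuses a translation and a compensating rescaling into one curve $f(t)=\lambda(ts)(a-ts,\,b-ts,\,c-ts)$ with $\lambda=4/S$, which stays in $\mathcal{M}$ by construction. It notes that $\Pi(f(t))=\lambda(ts)^3\Pi_0$ and deduces $d\lambda_0(s)=0$, hence $dS_0(s)=0$, from criticality and $\Pi_0\neq 0$. The nonvanishing of $\Pi_0$ there plays exactly the role of your $\mu=3/4\neq 0$.

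You instead write the full multiplier identity $\nabla\log\Pi=\mu\nabla g$ and pair it separately with $(e,e,e)$ and with $(a,b,c)$. Neither vector needs to be tangent to $\mathcal{M}$. This is legitimate because the multiplier identity is an equality between vectors in $\mathbb{C}^3\cong\mathbb{R}^6$.

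The paper's route is shorter and never computes $\nabla\Pi$. It only needs differentiability of $S$ at $0$, which is where $abc\neq0$ enters for it, just as regularity of $g$ is where it enters for you. Your route costs the check that the constraint is regular. In return it gives the explicit componentwise critical-point equations with the multiplier fixed at $\mu=3/4$, in the same spirit as the computation of $\lambda_1$ behind Lemma~\ref{lemma:critical-point-criterion}. Those equations carry more information than the single condition $dS_0=0$.
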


\begin{proof}

We construct a path in $\mathcal{M}$ passing through $(a,b,c)$ using translation.
Consider a slight translation of the three points, $(a, b, c) \mapsto (a - s, b - s, c - s)$ for some $s \in \mathbb{C}$.
Since $\Pi$ solely depends on the distances between points, the translation does not change $\Pi$.
But this translation does not satisfy the constraint $|a| + |b| + |c| = 4$, so we need to scale the points by a factor of $ \displaystyle{\lambda(s) := \frac{4}{|a - s| + |b - s| + |c - s|} = \frac{4}{S(s)}}$.
This gives a path in $\mathcal{M}$ passing through $(a,b,c)$ in the direction $s$:

\[
t \mapsto f(t) = (\lambda(ts)(a - ts), \lambda(ts)(b - ts), \lambda(ts)(c - ts))
\]

where $t \in \mathbb{R}$.
If we set $\Pi_0 = \Pi(a, b, c)$, then each point in the path has $\Pi(f(t)) = \lambda(ts)^3 \Pi_0$.

Since $(a,b,c)$ is a critical point, the derivative of $\Pi$ along any path passing through $(a,b,c)$ is 0.
Thus, we have $\displaystyle{\left. \frac{d\Pi(f(t))}{dt} \right\vert_{t=0} = 0}$. Then

\[
\left. \frac{d\Pi(f(t))}{dt} \right|_{t = 0}
=
\left. \frac{d}{dt} \left( \lambda(ts)^3\Pi_0 \right) \right|_{t = 0}
=
3 \Pi_0 d\lambda_0(s)
= 0.
\]

Since $\Pi_0 \neq 0$, we have $d\lambda_0(s) = 0$. 
Using $\displaystyle{\lambda(s) = \frac{4}{S(s)}}$ and $S(s) \neq 0$, we obtain $dS_0(s)=0$.
The above argument is valid for every direction $s$, so $s=0$ is a critical point of $S$, i.e., $dS_0=0$.

\end{proof}

Using Lemma \ref{lemma:stationary_s}, we obtain a necessary condition for critical points when $abc \neq 0$.
$dS_0 = 0$ then $\nabla S(0) = 0$, which is written as

\[
\nabla S(0) = - \left( \frac{a}{|a|} + \frac{b}{|b|} + \frac{c}{|c|} \right) = 0.
\]

This means that the sum of the unit vectors from the origin to the three points is zero. Without loss of generality, we can assume that $a, b, c$ have arguments $0, \frac{2\pi}{3}, \frac{4\pi}{3}$, respectively.
Let $|a| = \alpha, |b| = \beta, |c| = \gamma$, where $\alpha, \beta, \gamma \in (0, 4)$ and $\alpha + \beta + \gamma = 4$.
Then $\Pi^2$ can be written as

\begin{align*}
\Pi^2
&= (\alpha^2 + \alpha \beta + \beta^2)(\beta^2 + \beta \gamma + \gamma^2)(\gamma^2 + \gamma \alpha + \alpha^2) \\
&= s_1^2 s_2^2 - s_2^3 - s_1^3 s_3 \\
&= 16s_2^2 - s_2^3 - 64s_3.
\end{align*}

where $s_1 = \alpha + \beta + \gamma = 4$, $s_2 = \alpha \beta + \beta \gamma + \gamma \alpha$, and $s_3 = \alpha \beta \gamma$.
Using the Lagrange multiplier method subject to the constraint $\alpha + \beta + \gamma = 4$, we get

\[
\begin{cases}
(\alpha - \beta)(A - 64 \gamma) = 0 \\
(\beta - \gamma)(A - 64 \alpha) = 0 \\
(\gamma - \alpha)(A - 64 \beta) = 0
\end{cases}
\]

where $A = 32 s_2 - 3 s_2^2$.
The three values $\alpha, \beta, \gamma$ cannot all be distinct, since then $A = 64 \alpha = 64 \beta = 64 \gamma$, which is a contradiction.
Thus we have two cases: (1) $\alpha = \beta \neq \gamma$ and (2) $\alpha = \beta = \gamma$.

\paragraph{(1) $\alpha = \beta \neq \gamma$}\mbox{}\\

Let $\alpha = \beta = x$ and $\gamma = 4 - 2x$, where $x \in (0, 2)$.
Here we get $A - 64x = 32 s_2 - 3 s_2^2 - 64x = 0$, $s_2 = -3x^2 + 8x$ and hence

\[
A - 64x = -3x (3x - 4) (3x^2 - 12x + 16) = 0
\]

$x$ cannot be 0, and the quadratic equation $3x^2 - 12x + 16 = 0$ has no real solution.
So we get $x = \frac{4}{3}$, which gives $\alpha = \beta = \gamma = \frac{4}{3}$.
This contradicts the assumption $\alpha = \beta \neq \gamma$, so there is no valid solution in this case.

\paragraph{(2) $\alpha = \beta = \gamma$}\mbox{}\\

We directly get $\alpha = \beta = \gamma = \frac{4}{3}$, which gives a stationary configuration $\{(0, 0), (\frac{1}{\sqrt{3}}, 0), (-\frac{1}{2\sqrt{3}}, \frac{1}{2}), (-\frac{1}{2\sqrt{3}}, -\frac{1}{2})\}$ in $\mathcal{Z}$.
We get $G_i = 6 z_i$ for $i = 1, 2, 3, 4$, and thus this configuration is indeed a critical point of $W$ on $\mathcal{Z}$, by Lemma \ref{lemma:critical-point-criterion}.
The plot appears in Figure \ref{fig:TriangleCenter}.

\begin{figure}[h]
    \centering
    \begin{tikzpicture}[scale=3]
        \draw[->] (-0.9,0) -- (0.9,0) node[right] {$x$};
        \draw[->] (0,-0.9) -- (0,0.9) node[above] {$y$};
        
        \draw ( 0.5, 0.02) -- ( 0.5,-0.02) node[below] {$\tfrac12$};
        \draw (-0.5, 0.02) -- (-0.5,-0.02) node[below] {$-\tfrac12$};
    
        \draw ( 0.02, 0.5) -- (-0.02, 0.5) node[left] {$\tfrac12$};
        \draw ( 0.02,-0.5) -- (-0.02,-0.5) node[left] {$-\tfrac12$};

        \pgfmathsetmacro{\rightx}{1/sqrt(3)}
        \pgfmathsetmacro{\leftx}{-1/(2*sqrt(3))}

        \fill (0,0) circle[radius=0.9pt];
        \fill (\rightx,0) circle[radius=0.9pt];
        \fill (\leftx,0.5) circle[radius=0.9pt];
        \fill (\leftx,-0.5) circle[radius=0.9pt];

        \draw (-1.1, -1.1) rectangle (1.1, 1.1);
    \end{tikzpicture}
    \caption{Equilateral triangle with a center configuration.}
    \label{fig:TriangleCenter}
\end{figure}

\section{Variational Characterization of Relative Equilibria}

So far, we have found all possible relative equilibria of the four-vortex system.
We now examine the nature of each configuration in the Hamiltonian landscape.
By showing that the minimum of the Hamiltonian exists and must be a critical point, we can conclude that the square configuration is the unique global minimizer of the Hamiltonian.
For the other two configurations, we show that they are saddle points of the Hamiltonian by finding two directions along which the Hamiltonian increases and decreases.

\subsection{The global minimizer}

\begin{proposition}\label{prop:global_minimizer}
The square configuration, which has the smallest Hamiltonian value among all relative equilibria, is the unique global minimizer of the Hamiltonian $W$ in $\mathcal{Z}$.
\end{proposition}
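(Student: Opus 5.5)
The strategy is to work in the three-point formulation, where minimizing $W$ on $\mathcal{Z}$ is the same as maximizing $\Pi=|a-b||b-c||c-a|$ on $\mathcal{M}$. We then show that a maximizer exists and that it must be one of the critical points already found in Section 5.

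\emph{Existence.} The closure $\overline{\mathcal{M}}=\{(a,b,c):|a|+|b|+|c|=4\}$ is compact, and $\Pi$ is continuous on it. Hence $\Pi$ attains its maximum there. This maximum is strictly positive, since the square gives $\Pi=4\cdot2\cdot2=16$ (indeed $a=2$, $b=0$, $c=-2$). On the collision set $\overline{\mathcal{M}}\setminus\mathcal{M}$ we have $\Pi=0$. So every maximizer lies in the open set $\mathcal{M}$, which is a smooth manifold away from the points where some coordinate is $0$. Equivalently, $W$ is bounded below on $\mathcal{Z}$, tends to $+\infty$ at collisions, and $\overline{\mathcal{Z}}$ is compact, so a minimizer of $W$ exists in $\mathcal{Z}$.

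\emph{The minimizer is critical.} The main technical point is to handle points with $abc=0$, where the map $F$ of Lemma~\ref{lemma:diffeomorphism} fails to be a local diffeomorphism and $|a|+|b|+|c|$ is not smooth. To avoid this, I would argue directly on $\mathcal{Z}$. The set $\mathcal{Z}$ is a smooth manifold: it is the unit sphere in the subspace $\{\sum z_i=0\}$ with the collision set removed. The function $W$ is smooth on $\mathcal{Z}$. Therefore an interior global minimizer of $W$ on $\mathcal{Z}$ is a critical point of $W|_{\mathcal{Z}}$, and by the discussion in Section 3 it is a relative equilibrium. Sections 5.1 and 5.2 classify all such critical points, covering both $abc=0$ and $abc\neq0$. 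Up to the symmetries of Table~\ref{tab:symmetry-correspondence}, the minimizer must therefore be the square, the collinear configuration, or the triangle with center.

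\emph{Comparison of values.} Next we compare $\Pi$ at the three candidates.
\begin{itemize}
\item Square: $\Pi=4r(4-r)$ at $r=2$, giving $\Pi=16$.
\item Collinear: $\Pi=r(4-r)(4-2r)$ at $r=2-\tfrac{2}{\sqrt3}$. Here $r(4-r)=4-\tfrac43=\tfrac83$ and $4-2r=\tfrac{4}{\sqrt3}$, so $\Pi=\tfrac{32}{3\sqrt3}\approx 6.16$.
\item Triangle with center: $\alpha=\beta=\gamma=\tfrac43$, so each side is $\tfrac43\sqrt3$ and $\Pi=\left(\tfrac{4}{\sqrt3}\right)^3=\tfrac{64}{3\sqrt3}\approx 12.3$.
\end{itemize}
Thus the square strictly maximizes $\Pi$, that is, it strictly minimizes $W$, among the critical points. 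Combined with existence, the square is a global minimizer.

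\emph{Uniqueness.} Any other global minimizer would also be a critical point with $\Pi=16$. By the classification, it is then a square configuration, so it agrees with the square up to reindexing and rotation. This is uniqueness in the sense stated in Theorem~\ref{maintheorem}.

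The step I expect to need the most care is confirming that the classification in Section 5 really captures every critical point of $W$ on $\mathcal{Z}$. Two places need checking. First, in the case $abc=0$ the argument passed through the restricted space $\mathcal{Z}'$; this only yields necessary conditions, which is exactly what we need here. Second, the normalizations such as $|a|\le|c|$, $\theta\in[0,\pi]$, and the choice of arguments in Section 5.2 must lose no solutions, modulo the symmetries of Table~\ref{tab:symmetry-correspondence}.
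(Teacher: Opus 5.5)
Your proposal is correct and follows essentially the same route as the paper. Existence of a maximizer of $\Pi$ comes from compactness of $\overline{\mathcal{Z}}$ (the paper's $\mathcal{X}$), with positivity forcing it into $\mathcal{Z}$; criticality follows from Fermat's theorem on the open submanifold $\mathcal{Z}$; and the square wins the comparison $\Pi_s=16>\Pi_e=\tfrac{64}{3\sqrt3}>\Pi_c=\tfrac{32}{3\sqrt3}$. Your explicit uniqueness step (any minimizer is a critical point with $\Pi=16$, hence a square up to symmetry) spells out what the paper leaves implicit.
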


\begin{lemma}\label{lemma:compact_manifold}
Let 

\[
\mathcal{X} = \left\{ (z_1, z_2, z_3, z_4) \in \mathbb{C}^4 : \sum_{i=1}^4 z_i = 0, \sum_{i=1}^4 |z_i|^2 = 1 \right\}.
\]

Then $\mathcal{X}$ is a smooth and compact manifold.
\end{lemma}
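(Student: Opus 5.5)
We identify $\mathbb{C}^4$ with $\mathbb{R}^8$ and write $\mathcal{X}$ as the zero set of a smooth map $\Phi:\mathbb{R}^8\to\mathbb{R}^3$,
\[
\Phi(\mathbf{z}) = \Bigl(\sum_{i=1}^4 x_i,\ \sum_{i=1}^4 y_i,\ \sum_{i=1}^4 (x_i^2+y_i^2) - 1\Bigr).
\]
The plan is to show that $0$ is a regular value of $\Phi$. The regular value theorem then makes $\mathcal{X}$ a smooth embedded submanifold of dimension $8-3=5$, and compactness follows from Heine--Borel.

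\textbf{Regularity.} The gradients of the three components are
\[
(\mathbf{1},\mathbf{0}), \qquad (\mathbf{0},\mathbf{1}), \qquad 2(x_1,\dots,x_4,y_1,\dots,y_4),
\]
where $\mathbf{1}=(1,1,1,1)$. The first two are clearly linearly independent. Suppose the third were a combination $\mu(\mathbf{1},\mathbf{0})+\nu(\mathbf{0},\mathbf{1})$. Then all $x_i$ would equal one constant and all $y_i$ another, so all $z_i$ would coincide. The constraint $\sum z_i=0$ forces that common value to be $0$, which contradicts $\sum|z_i|^2=1$. Hence the Jacobian has rank $3$ at every point of $\mathcal{X}$.

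A cleaner alternative is the change of coordinates from the Hadamard transformation, since $\tfrac12\mathbf{H}_4$ is orthogonal. In the coordinates $(u,v,w)$ the set $\mathcal{X}$ becomes $\{|u|^2+|v|^2+|w|^2=4\}\cong S^5$, which is directly a smooth compact manifold. I will mention this as a remark.

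\textbf{Compactness.} $\mathcal{X}$ is closed, being the preimage of a point under the continuous map $\Phi$. It is bounded because $\sum|z_i|^2=1$ places it inside the unit sphere of $\mathbb{R}^8$. Heine--Borel then gives compactness.

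\textbf{Main obstacle.} There is no real difficulty here. The only point needing care is the linear independence argument, and specifically noticing that it uses both constraints together to exclude a degenerate point. The lemma's actual purpose comes afterwards: $\mathcal{Z}\subset\mathcal{X}$ is open, and $W\to+\infty$ on $\mathcal{X}\setminus\mathcal{Z}$. Consequently the minimum of $W$ on $\mathcal{Z}$ is attained in the interior, and it must be one of the critical points already found.
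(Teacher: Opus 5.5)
Your proof is correct and follows the paper's route. Both use the regular value theorem to get a 5-dimensional embedded submanifold, then closedness, boundedness and Heine--Borel for compactness. The only difference is that the paper works inside the 6-dimensional subspace $\mathcal{L}=\{\sum z_i=0\}$ and checks regularity of $I$ alone via $dI_{\mathbf z}(\mathbf z)=2$, which removes the linear-independence step you single out. The paper also notes $\mathcal{X}\cong S^5$, matching your Hadamard remark.
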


\begin{proof}
For the smoothness, we first consider the space without normalization,
\[
\mathcal{L} := \{\mathbf{z} = (z_1, z_2, z_3, z_4) \in \mathbb{C}^4 : z_1 + z_2 + z_3 + z_4 = 0\}.
\]
$\mathcal{L}$ is a complex linear subspace of $\mathbb{C}^4$ of complex dimension $3$, which is isomorphic to $\mathbb{C}^3$ (or equivalently, to $\mathbb{R}^6$).
For a smooth function $I(\mathbf{z}) = \sum_{i=1}^4 |z_i|^2$ defined on $\mathcal{L}$, we have $\mathcal{X} = I^{-1}(1)$.
It is enough to show that $1$ is a regular value of $I$.
For $\mathbf{v} \in T_{\mathbf{z}} \mathcal{L} = \mathcal{L}$, we have

\[
dI_{\mathbf{z}}(\mathbf{v})
= 2 \operatorname{Re} \sum_{i=1}^4 \overline{z_i} v_i
\]

If $\mathbf{z} \in I^{-1}(1)$, then $dI_{\mathbf{z}}(\mathbf{z}) = 2 \sum_{i=1}^4 |z_i|^2 = 2$.
Thus $dI_{\mathbf{z}}$ is surjective for every $\mathbf{z} \in I^{-1}(1)$.
Hence, by the regular value theorem, $\mathcal{X}$ is a smooth embedded submanifold of $\mathcal{L}$ of real dimension $\dim_{\mathbb{R}} (\mathcal{X}) = \dim_{\mathbb{R}} \mathcal{L} - 1 = 6 - 1 = 5$.
In fact $\mathcal{X}$ is diffeomorphic to the $5$-dimensional sphere $S^5$.

$\mathcal{X} = I^{-1}(1)$ is closed in $\mathcal{L}$, and $\mathcal{L}$ is closed in $\mathbb{C}^4$, so $\mathcal{X}$ is closed in $\mathbb{C}^4$.
It is also bounded since $\sum_{i=1}^4 |z_i|^2 = 1$.
Thus $\mathcal{X}$ is compact by Heine-Borel theorem.
\end{proof}

The smoothness and compactness of $\mathcal{X}$ are used in following lemmas.

\begin{lemma}[Existence of a minimizer]\label{lemma:existence_of_minimizer}

There exists $\mathbf{z}^* \in \mathcal{Z}$ that minimizes the Hamiltonian $W$ in $\mathcal{Z}$.
\end{lemma}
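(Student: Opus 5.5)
The plan is to use the compactness of $\mathcal{X}$ from Lemma \ref{lemma:compact_manifold} together with the fact that $W$ blows up to $+\infty$ at collisions. Minimizing $W$ is the same as maximizing the continuous function
\[
P(\mathbf{z}) := \prod_{1 \le i < j \le 4} |z_i - z_j|.
\]
This function is defined on all of $\mathcal{X}$, including the collision set, and satisfies $W = -\frac{1}{2\pi}\log P$ on $\mathcal{Z} = \mathcal{X} \cap \{P > 0\}$ (recall $\Gamma = 1$). Since $-\log$ is strictly decreasing, a maximizer of $P$ on $\mathcal{X}$ with $P>0$ is exactly a minimizer of $W$ on $\mathcal{Z}$.

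The steps are as follows.

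\textbf{(i) Existence of a maximizer.} $P$ is continuous on the compact set $\mathcal{X}$, so it attains a maximum at some $\mathbf{z}^* \in \mathcal{X}$.

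\textbf{(ii) The maximum is positive.} It suffices to exhibit one point of $\mathcal{X}$ with distinct coordinates. The square $(\tfrac12, \tfrac{i}{2}, -\tfrac12, -\tfrac{i}{2})$ lies in $\mathcal{X}$ and has $P > 0$. Hence $P(\mathbf{z}^*) > 0$, which means all coordinates of $\mathbf{z}^*$ are distinct and $\mathbf{z}^* \in \mathcal{Z}$.

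\textbf{(iii) Minimality on $\mathcal{Z}$.} For every $\mathbf{z} \in \mathcal{Z}$ we have $P(\mathbf{z}) \le P(\mathbf{z}^*)$, so $W(\mathbf{z}) \ge W(\mathbf{z}^*)$.

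I would also record that $\mathcal{Z}$ is an open subset of the smooth manifold $\mathcal{X}$. Consequently the interior minimizer $\mathbf{z}^*$ is a critical point of $W|_{\mathcal{Z}}$, which is the form needed for Proposition \ref{prop:global_minimizer}.

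There is no real obstacle here. The only point requiring care is that $W$ itself is not continuous on the compact set $\mathcal{X}$, since it is $+\infty$ at collisions. I handle this by passing to the bounded continuous function $P$, or equivalently to $e^{-2\pi W}$ extended by zero to the collision set, instead of working with $W$ directly. The step that secures the conclusion is (ii): exhibiting a single non-collision point shows the maximizer cannot lie on the collision set.
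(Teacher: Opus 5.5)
Your proposal is correct and follows the paper's proof essentially verbatim. The paper also maximizes the continuous function $\Pi = 64\prod_{i<j}|z_i-z_j|$ (your $P$ up to the factor $64$) on the compact set $\mathcal{X}$, uses the square configuration ($\Pi=16$) to show the maximizer lies in $\mathcal{Z}$, and concludes via the strict monotonicity of $-\log$; your closing remark about criticality is exactly the paper's next lemma.
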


\begin{proof}
Since $\Pi$ is continuous on $\mathcal{X}$ and $\mathcal{X}$ is compact, there exists a maximizer $\mathbf{z}^* \in \mathcal{X}$ by the extreme value theorem.
It is easy to see that $\Pi(\mathbf{z}^*) > 0$, since for instance, $\Pi$ has a value of 16 at the square configuration.
(Recall that we defined $\Pi(\mathbf{z}) = |a - b| |b - c| |c - a| = 64 \prod_{i<j} |z_i - z_j|$.)
So we can write $\mathbf{z}^* \in \mathcal{X} \setminus \Pi^{-1}(0) = \mathcal{Z}$.
In $\mathcal{Z}$, $W$ is well-defined and continuous. 
In $W = - \frac{1}{2\pi} \log \left( \frac{1}{64} \Pi \right)$, the $-\log$ is a strictly decreasing function, so $\mathbf{z}^*$ minimizes $W$ in $\mathcal{Z}$. 
\end{proof}

\begin{lemma}\label{lemma:minmizer_is_critical}
If $\mathbf{z}^* \in \mathcal{Z}$ minimizes the Hamiltonian $W$ in $\mathcal{Z}$, then $\mathbf{z}^*$ is a critical point.
\end{lemma}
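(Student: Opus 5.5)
The claim is that a global minimizer of $W$ on $\mathcal{Z}$ is a critical point of $W$ on $\mathcal{Z}$. I will prove this by viewing $\mathcal{Z}$ as an open subset of the smooth manifold $\mathcal{X}$ of Lemma~\ref{lemma:compact_manifold} and applying the first-order necessary condition for interior extrema on a manifold.

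\textbf{Step 1: $\mathcal{Z}$ is an open submanifold of $\mathcal{X}$.} By definition, $\mathcal{Z} = \mathcal{X} \setminus \Pi^{-1}(0)$. Here $\Pi$, equivalently $\prod_{i<j}|z_i - z_j|$, is continuous on $\mathcal{X}$, so $\Pi^{-1}(0)$ is closed in $\mathcal{X}$. Hence $\mathcal{Z}$ is open in $\mathcal{X}$, and by Lemma~\ref{lemma:compact_manifold} it is a smooth $5$-dimensional manifold with $T_{\mathbf{z}}\mathcal{Z} = T_{\mathbf{z}}\mathcal{X}$ at every point. On $\mathcal{Z}$ each $|z_i - z_j|$ is positive and smooth, so $W = -\frac{1}{2\pi}\sum_{i<j}\log|z_i - z_j|$ is smooth on $\mathcal{Z}$.

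\textbf{Step 2: the derivative vanishes along every curve.} Fix $\mathbf{v} \in T_{\mathbf{z}^*}\mathcal{Z}$. Choose a smooth curve $\gamma:(-\varepsilon,\varepsilon)\to\mathcal{X}$ with $\gamma(0)=\mathbf{z}^*$ and $\gamma'(0)=\mathbf{v}$. Because $\mathcal{Z}$ is open in $\mathcal{X}$, after shrinking $\varepsilon$ we may assume $\gamma$ stays in $\mathcal{Z}$. The function $g(t) = W(\gamma(t))$ is then smooth on an open interval and has a minimum at the interior point $t=0$. By Fermat's theorem, $g'(0) = dW_{\mathbf{z}^*}(\mathbf{v}) = 0$. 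Since $\mathbf{v}$ was arbitrary, $d(W|_{\mathcal{Z}})_{\mathbf{z}^*} = 0$, so $\mathbf{z}^*$ is a critical point in the sense of Section~3. Equivalently, by the Lagrange multiplier theorem, $\nabla W(\mathbf{z}^*)$ lies in the span of the gradients of the center and impulse constraints.

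\textbf{Main point to check.} The only place the argument could fail is that the minimizer might sit on the boundary, where $W$ is not defined or not differentiable. This is ruled out by Lemma~\ref{lemma:existence_of_minimizer}, which gives $\Pi(\mathbf{z}^*) > 0$, so $\mathbf{z}^*$ lies in the open set $\mathcal{Z}$. One also needs the constraint set to be a genuine manifold near $\mathbf{z}^*$, so that tangent vectors are realized by curves. This is supplied by the regular value argument in Lemma~\ref{lemma:compact_manifold}.
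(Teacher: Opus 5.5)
Your proof is correct and takes the same route as the paper. In both, $\mathcal{Z}$ is an open subset of the smooth manifold $\mathcal{X}$ on which $W$ is smooth, so Fermat's theorem gives $d(W|_{\mathcal{Z}})_{\mathbf{z}^*}=0$; you simply unpack that theorem along curves through $\mathbf{z}^*$. Your appeal to Lemma~\ref{lemma:existence_of_minimizer} to keep $\mathbf{z}^*$ off the collision set is unnecessary, since $\mathbf{z}^*\in\mathcal{Z}$ is already part of the hypothesis.
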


\begin{proof}
Since $\mathcal{Z}$ is an open subset of the smooth manifold $\mathcal{X}$, it is itself a smooth manifold, and $W$ is smooth on it.

$\mathbf{z}^*$ is a global minimizer of $W$ on $\mathcal{Z}$, so it is in particular a local minimizer. Hence, by Fermat's theorem on smooth manifolds,

\[
d\left(W|_{\mathcal{Z}}\right)_{\mathbf{z}^*}=0.
\]

Therefore, $\mathbf{z}^*$ is a critical point of $W$ on $\mathcal{Z}$.
\end{proof}

\begin{proof}[Proof of Proposition~\ref{prop:global_minimizer}]
By Lemmas \ref{lemma:existence_of_minimizer} and \ref{lemma:minmizer_is_critical}, a minimizer of $W$ in $\mathcal{Z}$ must be one of the critical points.
The subscripts $s$, $c$, and $e$ denote square, collinear, and an equilateral triangle with a center configurations, respectively.
$\Pi_s = 16$, $\displaystyle{\Pi_c = \frac{32}{3\sqrt{3}}}$, and $\displaystyle{\Pi_e = \frac{64}{3\sqrt{3}}}$, so $\Pi_s > \Pi_e > \Pi_c$.
Then $W_s < W_e < W_c$, so the square configuration is a global minimizer of $W$ in $\mathcal{Z}$.
\end{proof}

\subsection{Saddle points}

\begin{proposition}\label{prop:saddle_points}
The collinear configuration and the equilateral triangle with a center configuration are saddle points of the Hamiltonian $W$ in $\mathcal{Z}$.
\end{proposition}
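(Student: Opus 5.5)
For each of the two configurations I will exhibit, inside $\mathcal{Z}$, one smooth curve through it along which $W$ strictly increases and one along which $W$ strictly decreases. Equivalently, since $W = -\frac{1}{2\pi}\log(\Pi/64)$ is a strictly decreasing function of $\Pi$, I need a curve along which $\Pi$ has a strict local maximum and one along which $\Pi$ has a strict local minimum at the configuration. Because every curve in $\mathcal{M}$ or $\mathcal{M}'$ near the point lifts locally to $\mathcal{Z}$, it suffices to build the curves in the three-point picture. For the equilateral triangle with a center, $abc\neq 0$, so Lemma \ref{lemma:diffeomorphism} provides the local diffeomorphism. For the collinear configuration, $b=0$, so I stay inside $\mathcal{Z}'\subset\mathcal{Z}$ and use $F'$. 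In both cases I will reuse the explicit one-parameter formulas for $\Pi$ already computed in Section 5.

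\textbf{Collinear configuration.} Take $(a,c)=(r,(4-r)e^{i\theta})$ with base point $r_0=2-\frac{2}{\sqrt3}$, $\theta=0$. Along $\theta=0$ we have $\Pi(r)=r(4-r)(4-2r)$ and $\Pi'(r)=6r^2-24r+16$. Hence $\Pi''(r_0)=12r_0-24=-8\sqrt3<0$, so $\Pi$ has a strict local maximum in the $r$-direction and $W$ increases there. Along fixed $r=r_0$, varying $\theta$ gives $\Pi(\theta)=r(4-r)\sqrt{r^2+(4-r)^2-2r(4-r)\cos\theta}$, which is strictly increasing in $|\theta|$ near $0$. So $\Pi$ has a strict local minimum at $\theta=0$ and $W$ decreases in that direction. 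The $\theta$-curve preserves $b=0$, so it stays in $\mathcal{Z}'$. I also need to check that it stays inside the collision-free set, which holds since $r_0(4-r_0)|a-c|>0$ nearby.

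\textbf{Equilateral triangle with a center.} Take $a,b,c$ with arguments $0,\frac{2\pi}{3},\frac{4\pi}{3}$ and moduli $\alpha,\beta,\gamma$ summing to $4$, based at $\alpha=\beta=\gamma=\frac43$. For one direction I will restrict to this family and use $\Pi^2=16s_2^2-s_2^3-64s_3$, taking the curve $\alpha=\beta=x$, $\gamma=4-2x$. Expanding around $x=\frac43$ should show whether $\Pi^2$ has a strict local max or min. I expect the max, because $s_2$ is maximized and $s_3$ is maximized at the symmetric point, so the two effects compete; I will therefore compute the second derivative explicitly.

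For the opposite direction I will leave the family, for example by rotating one point. Take $(a,b,c)=\frac43(e^{i\phi},\omega,\omega^2)$ renormalized, which keeps all moduli equal to $\frac43$. Then $\Pi$ is a product of chord lengths $\frac83|\sin(\cdot/2)|$ with angular gaps $\frac{2\pi}{3}-\phi$, $\frac{2\pi}{3}$, $\frac{2\pi}{3}+\phi$. Since $\sin(\pi/3-\phi/2)\sin(\pi/3+\phi/2)=\sin^2(\pi/3)-\sin^2(\phi/2)$, $\Pi$ has a strict local maximum at $\phi=0$. So if the radial curve gives a local minimum of $\Pi$, the saddle is established.

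\textbf{Main obstacle.} If instead the radial computation also yields a maximum, I will try the curve from Lemma \ref{lemma:stationary_s}: translating $(a,b,c)\mapsto\lambda(s)(a-s,b-s,c-s)$ gives $\Pi=\lambda(s)^3\Pi_0$, and $S(s)$ is the Fermat–Torricelli sum, minimized at $s=0$. Then $\lambda$ is maximized at $s=0$, which again gives a maximum, not a minimum. A suitable decreasing direction for $\Pi$ therefore has to be found some other way. My fallback is the direction that sends the center vortex toward another vortex in $\mathcal{Z}$ and pulls it back to $\mathcal{M}$, together with a second-order expansion of $\Pi$. I expect this sign determination to be the crux of the proof.
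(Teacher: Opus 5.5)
Your collinear argument is the paper's: $\Pi$ has a strict maximum in $r$ since $g''(r_0)=-8\sqrt3$, and a strict minimum in $\theta$. Your rotation curve $\tfrac43(e^{i\phi},\omega,\omega^2)$ is a valid, more explicit substitute for the paper's translate-and-rescale curve $h(t)$. Both show that $\Pi$ has a strict local maximum, so $W$ strictly increases, at the triangle-with-center point.

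The gap is the opposite direction there. You never produce a curve along which $\Pi$ strictly increases, and your candidates do not work as planned. On your radial curve $\alpha=\beta=x=\tfrac43+h$, $\gamma=\tfrac43-2h$, the paper's formula $\Pi^2=3x^2(3x^2-12x+16)^2$ expands exactly to
\[
\Pi^2=\tfrac{4096}{27}+128h^3+27h^6 .
\]
So the second derivative you plan to compute is zero, and your expected ``max'' is wrong. More generally, write $(\alpha,\beta,\gamma)=\tfrac43(1,1,1)+(x,y,z)$ with $x+y+z=0$. Then $\Pi^2=\tfrac{4096}{27}-64xyz+\tfrac18(x^2+y^2+z^2)^3$, so the whole Hessian of $\Pi^2$ in the fixed-argument family vanishes, and no second-order test there can decide anything.

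Your fallbacks point the wrong way:
\begin{itemize}
\item The translate-and-rescale curve gives a maximum of $\Pi$, as you note.
\item Moving the central vortex by $\epsilon$ also gives a maximum. After recentering and renormalizing the impulse, the product of the six distances is $|R^3-\epsilon^3|\,(1+\tfrac34|\epsilon|^2)^{-3}<R^3$, with $R=1/\sqrt3$.
\end{itemize}
Both only reproduce the direction you already have. Also, where you write ``decreasing direction for $\Pi$'' you need an increasing one.

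The missing step is a third-order or exact computation, and either of these closes it:
\begin{itemize}
\item Push your own curve to third order. Since $128h^3>0$ for $h>0$, you get $\Pi>\Pi_0$ there. The sign change of $128h^3$ alone already shows the point is not a local extremum.
\item Follow the paper, which takes $x=t$, $y=-t$, $z=0$, i.e.\ the curve $(\rho+t,(\rho-t)\omega,\rho\omega^2)$ with $\rho=\tfrac43$. Then $xyz=0$, and the sum-of-cubes identity gives exactly $\Pi^2=\Pi_0^2+t^6$.
\end{itemize}
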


\begin{proof}
To show that a configuration is a saddle point of $W$ in $\mathcal{Z}$, it suffices to find two paths in $\mathcal{Z}$ passing through the configuration, such that $W$ is strictly increasing along one path and strictly decreasing along the other path.

For the collinear configuration, we can parametrize a path in reduced space $\mathcal{M}'$ as $(a, c) = (r, (4-r)e^{i \theta})$ with $r \in (0, 2]$ and $\theta \in [0, \pi]$.
Then 

\[
\Pi(r, \theta) = r (4-r) \sqrt{r^2 + (4-r)^2 - 2 r (4-r) \cos \theta}.
\]

The collinear configuration corresponds to $(r, \theta) = (r_0, 0)$, where $r_0 = 2 - \frac{2}{\sqrt{3}}$.
First, we keep $\theta = 0$ and vary $r$.
Then 
\[
\Pi = r (4-r) (4-2r) =: g(r)
\]
and we get $g'(r) = 6r^2 - 24r + 16$ and $g''(r) = 12r - 24$.
Since $g'(r_0) = 0$ and $g''(r_0) = 12 r_0 - 24 < 0$, $\Pi(r, 0) < \Pi(r_0, 0)$ for every $r \neq r_0$ near $r_0$.

Second, we keep $r = r_0$ and vary $\theta$.
Then
\[
\Pi = r_0 (4-r_0) \sqrt{(4-2r_0)^2 + 2r_0 (4-r_0)(1-\cos{\theta})}.
\]
$1-\cos{\theta}$ has a local minimum at $\theta = 0$, so $\Pi(r_0, \theta) > \Pi(r_0, 0)$ for every $\theta \neq 0$ near $0$.
Thus, we have found two paths in $\mathcal{M}'$ passing through the collinear configuration, such that $\Pi$ is strictly increasing along one path and strictly decreasing along the other path.
Hence, the collinear configuration is a saddle point of $\Pi$ in $\mathcal{M}'$.
By the local diffeomorphism $F': \mathcal{Z'} \to \mathcal{M'}$, the collinear configuration is a saddle point of $W$ in $\mathcal{Z'}$ and hence in $\mathcal{Z}$.

For the equilateral triangle with a center configuration, we can write the point in $\mathcal{M}$ as $(a_0, b_0, c_0) = (\rho, \rho \omega, \rho \omega^2)$ with $\rho = \frac{4}{3}$ and $\omega = e^{i \frac{2\pi}{3}}$.
The first path is $t \mapsto (a_t, b_t, c_t) = (\rho + t, (\rho - t) \omega, \rho \omega^2)$ where $t \in \mathbb{R}$ and $|t| < \rho$.
Then
\[
\Pi(a_t, b_t, c_t)^2 = |a_t - b_t|^2 |b_t - c_t|^2 |c_t - a_t|^2  = \frac{4096}{27} + t^6
\]
and this is strictly greater than $\Pi(a_0, b_0, c_0)^2 = \frac{4096}{27}$ for every $t \neq 0$.

The second path is similar to the path we used in Lemma \ref{lemma:stationary_s}.
We construct $t \mapsto h(t) = (\lambda(t)(a_0 - t), \lambda(t)(b_0 - t), \lambda(t)(c_0 - t))$ in $\mathcal{M}$ where $\lambda(t) = \frac{4}{S(t)} = \frac{4}{|a_0 - t| + |b_0 - t| + |c_0 - t|}$ where $t \in \mathbb{R}$ and $|t| < \rho$.
For sufficiently small $t \neq 0$, it is easy to see that $S(t) > 3\rho = 4$. 
This gives $\lambda(t) < 1$ and hence $\Pi(h(t)) = \lambda(t)^3\Pi(a_0, b_0, c_0) < \Pi(a_0, b_0, c_0)$, so $\Pi$ strictly decreases along the path $h(t)$. 
Thus, the equilateral triangle with a center configuration is a saddle point of $\Pi$ in $\mathcal{M}$. 
By the local diffeomorphism $F : \mathcal{Z} \to \mathcal{M}$, the equilateral triangle with a center configuration is a saddle point of $W$ in $\mathcal{Z}$.

\end{proof}

\section{Proof of the Main Theorem}

\begin{proof}
By Proposition \ref{prop:sc_re}, it suffices to find all relative equilibria in order to classify all stationary
configurations of the four-identical-vortex system.

We reduce the phase space to $\mathcal{Z}$ and then extend the discussion to the full phase space.
Thus, the problem is formulated as finding critical points of the Hamiltonian on $\mathcal{Z}$.

Using the Hadamard transformation, we convert the original problem into a three-point problem.
We find solutions in the degenerate and nondegenerate cases, corresponding to $abc=0$ and $abc\neq 0$, respectively, yielding a total of three possible stationary configurations: a square, a collinear configuration, and an equilateral triangle with a center.
Propositions \ref{prop:global_minimizer} and
\ref{prop:saddle_points} determine the nature of each
configuration in the Hamiltonian landscape.
Specifically, the square configuration is a global minimizer of the Hamiltonian.

We now return to general configurations in $\mathbb{C}^4$.
After the appropriate spatial translation and scaling, a stationary configuration has center $z_c \in \mathbb{C}$, impulse $I \in \mathbb{R}^+$, and identical intensity $\Gamma \neq 0$.
By Lemma \ref{lemma:angular-velocity}, its angular velocity is $\displaystyle{\omega = \frac{3 \Gamma}{\pi I}}$.
Hence, we obtain the three types of stationary configurations in $\mathbb{C}^4$ listed in the theorem, completing the proof.

\end{proof}

This result can also explain the observation of Mayer in the floating magnet experiment.
Our result shows that the square configuration is the only stable configuration.
There are three possible stationary configurations, but with perturbations in the real world, every configuration may eventually reach the square configuration.
This aligns with the Mayer's experiment.

The effectiveness of the reduction relies on the special algebraic structure of four points and, in particular, on the existence of the Hadamard matrix of order four. 
For this reason, the method in its present form is specific to the four-vortex problem. 
Nevertheless, the calculation illustrates how discrete symmetry and an appropriate choice of internal coordinates can simplify the classification and variational analysis of relative equilibria.

\section*{Acknowledgements}

The author would like to thank Prof. In-Jee Jeong for proposing this research problem and for many helpful discussions throughout this work.

This work was supported by Yonsei University under BRL for Kinetic Dynamics and Continuum Mechanics (Project RS-2024-00406821).

Claude Fable 5 (Anthropic) was used during the initial exploration of a Hadamard-transformation-based reduction of the four-point problem to an equivalent three-point problem. 
The authors independently verified and developed all mathematical arguments presented in the manuscript and take full responsibility for its content.

\bibliographystyle{plain}
\bibliography{references}

\end{document}